\DeclareMathOperator*{\id}{id}
\DeclarePairedDelimiter\ceil{\lceil}{\rceil}
\newtheorem{teo}{Theorem} %[section]
\newtheorem{cor}[teo]{Corollary}
\newtheorem{lem}[teo]{Lemma}
\newtheorem{pro}[teo]{Proposition}
\newtheorem{defe}[teo]{Definition}
\newtheorem{ex}[teo]{Example}
\newtheorem{rem}[teo]{Remark}
\newcommand{\resp}[1]{\ (resp. #1)}
\newcommand{\ifnv}[2]{\ifthenelse{\equal{#1}{}}{}{#2}}
\newcommand\sett[3][]{\left\{\left.#2\ifnv{#1}{\in #1}\vphantom{#3}\right|#3\right\}}
\newcommand{\card}[1]{\left|#1\right|}
\newcommand{\len}[1]{\left|#1\right|}
\newcommand{\abs}[1]{\left|\vphantom{f^f_f}#1\right|}
\newcommand{\db}{{\mathfrak d_{H}}}
\newcommand{\dl}{{\mathfrak d_{L}}}
\newcommand\dc{\mathfrak d_C}
\newcommand{\dll}{{d_L}}
\newcommand{\dhh}{d_{H}}
\newcommand\diam\theta
\newcommand\maxf[1]{\left\|{#1}\right\|}
\newcommand\minf[1]{\left|{#1}\right|}
\newcommand\cocy[2][]{s_{#2}^{#1}}
\newcommand{\uinf}[1]{#1^\infty}
\newcommand\maxal[1]{A^+_{#1}}
\newcommand{\N}{\mathbb N}
\newcommand{\Ns}{\mathbb N\setminus\{0\}}
\newcommand{\R}{\mathbb R}
\newcommand\M{\mathbb M}
\newcommand\G{\mathcal G}
\newcommand\dm{\mathfrak d_M}
\newcommand\dmm{d_M}
\newcommand\F{\mathcal F}
\newcommand{\co}[2]{\left\llbracket #1,#2\right\llbracket}
\newcommand{\oc}[2]{\left\rrbracket #1,#2\right\rrbracket}
\newcommand{\oo}[2]{\left\rrbracket #1,#2\right\llbracket}
\newcommand{\cc}[2]{\left\llbracket #1,#2\right\rrbracket}
\newcommand{\compl}[1]{{#1}^C}
\newcommand\maxd[1][f]{d_{#1}^+}
\newcommand\mind[1][f]{d_{#1}^-}
\newcommand{\ie}{\textit{i.e.}\ }
\newcommand\besi[1]{\mathfrak C_{#1}}
\newcommand\weyl[1]{\mathfrak S_{#1}}
\newcommand{\ipart}[1]{\left\lfloor #1\right\rfloor}
\begin{document}
\date{}
\title{Cellular automata and substitutions in topological spaces defined via edit distances}
\author{Firas BEN RAMDHANE\textsuperscript{1} and Pierre GUILLON\textsuperscript{2}}
\maketitle
\begin{center}
	\textsuperscript{1,2}Aix Marseille Univ, CNRS, Centrale Marseille, I2M, Marseille, France, \\
	\textsuperscript{1}Sfax University, Faculty of Sciences of Sfax, LMEEM, Sfax, Tunisia.\\
	\textsuperscript{1}\url{firas.benramdhane@fss.u-sfax.tn}. \\
	\textsuperscript{2}	\url{pguillon@math.cnrs.fr}.\\
\end{center}
\vspace{1cm}
\begin{abstract}
The Besicovitch pseudo-metric is a shift-invariant pseudo-metric on the set of infinite sequences, that enjoys interesting properties and is suitable for studying the dynamics of cellular automata.
They correspond to the asymptotic behavior of the Hamming distance on longer and longer prefixes.
Though dynamics of cellular automata were already studied in the literature, we propose the first study of the dynamics of substitutions.
We characterize those that yield a well-defined dynamical system as essentially the uniform ones.
We also explore a variant of this pseudo-metric, the Feldman pseudo-metric, where the Hamming distance is replaced by the Levenshtein distance. %, with the aim of studying symbolic dynamical systems in their associated quotient space.
Like in the Besicovitch space, cellular automata are Lipschitz in this space, but here also all substitutions are Lipschitz.
In both spaces, we discuss equicontinuity of these systems, and give a number of examples, and generalize our results to the class of the dill maps, that embed both cellular automata and substitutions.
\end{abstract}

\section{Introduction}
In \cite{blanchard_cellular_1997} were studied the dynamics of cellular automata in the spaces of sequences endowed with the Besicovitch pseudo-metric, which is defined as the asymptotics of the Hamming distance over prefixes of the sequences.
This corresponds to the $\bar d$-metric defined for ergodic purposes in \cite{ornstein}.
\cite{feldman}, and independently \cite{katok}, proposed to replace the Hamming distance by the Levenshtein distance from \cite{levenshtein1966binary}, and get the $\bar f$-metric, which is useful in Kakutani equivalence theory.
The Levenshtein distance depends on the minimum number of the edit operations (deletion, insertion, substitution) required to change one word into an other word. 
It used extensively for information theory, linguistics, word algorithmics, statistics\ldots.
One can read some properties of the pseudo-metric in \cite[Chapter~2]{orw}, and a nice history of this notion in \cite{kwietniak}.

The recent \cite{garcia2020topological} can be seen as presenting a nice picture of those systems for which the identity map from the Cantor space into the Feldman space is a topological factor map, after a similar task has been achieved for the Besicovitsh space in \cite{garcia-ramos_weak_2014}.
Here, we adopt a complementary point of view, by considering the dynamics within the space itself.
Though the work on the Besicovitch space concerned mainly cellular automata so far, relaxing the pseudometric to edit space allows to naturally consider a larger class of systems, that also includes substitutions : the so-called \emph{dill maps}.

%	In the literature, similar definitions are used for the study of symbolic dynamical systems, ergodic theory and even statistical properties. We can cite \cite{dominik2017feldman}, \cite{garcia2020topological} and \cite{zheng2020new} for more details. 
%les deux distances sont des "edit distances"

In Section~\ref{s:def}, we will introduce some basic vocabulary of symbolic dynamical systems, including %for that we can cite \cite{kurka2003topological} as a good reference for this theory. And then we introduce
dill maps, %which are a generalisation of cellular automata and substitutions.
and define the Besicovitch and Feldman spaces. %, which depends on a pseudo-metric constructed from the Levenshtein distance, and then we give some properties of this pseudo-metric.
In Section~\ref{s:bes}, we study dill maps over the Besicovitch space, give a sufficient and necessary conditions for them to induce a well-defined dynamical system over this space, and give some examples of behaviours.
In Section~\ref{s:feld}, we do the same within the Feldman space. %, where we give some dynamical results of dill maps, and prove that there is no expansive CA over this space.

\section{Definitions and basic results}\label{s:def}
The aim of this section is to introduce some concepts and basic notations in symbolic dynamics, that will be used throughout this paper, and to introduce some symbolic dynamical objects and topological spaces. 

We start with some terminology in word combinatorics.
We fix once and for all an \emph{alphabet} $A$ of finitely many \emph{letters} (it will be precised in each example, but general in our statements).
A \emph{finite word} over $A$ is a finite sequence of letters in $A$; it is convenient to write a word as $u=u_{\co{0}{\len u}}$ to express $u$ as the concatenation of the letters $u_0,u_1,\ldots,u_{\len{u}-1}$, with $\len{u}$ representing the length of $u$, that is, the number of letters appearing in $u$ and $\co{0}{\len u}=\{0,...,|u|-1\}$.
The unique word of length $0$ is the empty word denoted by $\lambda$.
The number of occurrences of a letter $a\in A$ in a finite word $u$ is denoted by $\len{u}_a$.
An \emph{infinite word}  $x=x_0x_1x_2\ldots$ over $A$ is the concatenation of infinitely many letters from $A$.
The set of all finite \resp{infinite} words over $A$ is denoted by $A^*$ \resp{$A^\N$}, and $A^n$ is the set of words of length $n\in \N$ over $A$.

\subsection{Symbolic dynamics}
Let us now introduce some basic notions in symbolic dynamics. First we equip the set $A^\N$ with the product topology on each copy of $A$. The topology defined on $A^\N$ is equivalent to the topology defined by the \emph{Cantor distance} denoted by $\dc$ and defined as follows:
$$ \forall x\neq y\in A^\N, \dc(x,y)=2^{-\min\sett{n\in \N}{x_n\neq y_n}}, \forall x\neq y\in A^\N, \text{ and } \dc(x,x)=0, \forall x\in A^\N.$$
This space, called the \emph{Cantor} space, is compact, totally disconnected and perfect.

A (topological) \emph{dynamical system} is a pair $(X_d,F)$ where $F$ is a continuous map from a compact metric space $X_d=(X,d)$ to itself.
When $X_d$ is understood from the context, we may omit it.
%A \emph{topological dynamical system} $(X_d, F)$ is a dynamical system where $X_d$ is a compact space.
In particular, the \emph{shift} dynamical system is the couple $(A^\N,\sigma)$, where $\sigma$ is the \emph{shift} defined for all $x\in A^\N$ by $\sigma(x)_i=x_{i+1}$, for $i\in \N$.

We can now introduce some topological properties of a dynamical system $(X_d,F)$. %, we recall that the $n$-th iteration of $F$ is defined by $F^0(x)=x$ and $F^{n+1}(x)=F(F^n(x))$; w
We say that $x\in X$ is a \emph{fixed point} if $F(x)=x$; it is \emph{periodic} if $F^n(x)=x$ for some $n>0$.
The map $F$ is $M$-\emph{Lipschitz}, for $M>0$, if $d(F(x),F(y))\leq Md(x,y)$ for all $x,y\in X$.

A point $x\in X$ is an \emph{equicontinuous point} of $(X_d,F)$ if: 
$$\forall \varepsilon>0,\exists \delta >0, \forall y\in X,d(x,y)<\delta\implies\forall t\in\N,  d(F^t(x),F^t(y))<\varepsilon.$$
The following property is a strong form of non-equicontinuity. A point $x\in X$ is \emph{$1^-$-unstable} if:
\[\forall \varepsilon>0, \forall \delta>0, \exists y\in X,d(x,y)<\delta\text{ but }\exists t\in\N, d(F^t(x),F^t(y))>1-\varepsilon.\]
%Note that if a point is unstable for $F$, then it is not equicontinuous, actually, it is stronger than the non-equicontinuous.
A dynamical system $(X_d,F)$ is \emph{equicontinuous} if: 
$$\forall \varepsilon>0,\exists \delta >0, \forall x\in X, \forall y\in B(x,\delta),\forall t\in\N,  d(F^t(x),F^t(y))<\varepsilon.$$
Note that if $F$ is $M$-Lipschitz, then $F^t$ is $M^t$-Lipschitz.
It is then clear that if $F$ is $1$-Lipschitz, then $F$ is equicontinuous (and it is actually an equivalence, up to equivalent distance, as seen for instance in \cite[Proposition ~2.41]{kurka2003topological}).
A dynamical system $(X_d,F)$ is \emph{sensitive} if: 
$$\exists\varepsilon>0,\forall x\in X, \forall \delta>0, \exists y\in X,d(x,y)<\delta\text{ but }\exists t\in\N, d(F^t(x),F^t(y))>\varepsilon.$$
A dynamical system $(X_d,F)$ is \emph{expansive} if: 
$$\exists \varepsilon>0 ,\forall x\neq y\in X, \exists t\in\N, d(F^t(x),F^t(y))>\varepsilon.$$

As examples of dynamical systems, we will be interested in this paper by {cellular automata}, substitutions, and in general {dill maps}, which are defined from the Cantor space $X=A^\N$ to itself.
 For more details, we can refer to \cite{fogg2002substitutions}, \cite{cant} and \cite{kurka2003topological}. 

\subsection{Cellular automata}
\begin{defe}
A \emph{cellular automaton} (CA) with diameter $\diam$ is a map $F :A^\N\to A^\N$, such that there exists a map called \emph{local rule} $f:A^\diam \to A$ such that for all $x\in A^\N$, $i\in \N$: $F(x)_i=f(x_{\co{i}{i+\diam}}).$
\end{defe}
\begin{ex}
%Let $A$ be an alphabet. 
\begin{enumerate}
\item The shift which is a CA with diameter $\diam=2$ and local rule $f$ such that $f(ab)=b$.
\item Let $A=\{0,1\}$. The \emph{XOr} is the CA with diameter $\diam=2$ and local rule defined as follows:  
\[f(ab)=a+b \bmod  2, \text{ for every } a,b\in \{0,1\}.\]
\item Let $A=\{0,1\}$. The \emph{Min} CA with diameter $\diam=2$ and local rule $f$ defined as follows: 
\[f(ab)=\min\{a,b\}, \forall a,b\in A.\]
\end{enumerate}
\end{ex}
In the Cantor space, an elegant characterization of cellular automata was was given by Hedlund in \cite{hedlund1969endomorphisms} as follows: 
%\begin{teo} 
A function $F:A^\N\to A^\N$ is a cellular automaton if and only if it is a continuous function with respect to the metric Cantor space and shift-invariant (\ie $\forall x\in A^\N,F(\sigma(x))=\sigma(F(x))$).
%\end{teo}

\subsection{Substitutions} 
\begin{defe}
%Let $A$ be an alphabet.
\begin{enumerate}
\item A \emph{substitution} $\tau$ is a nonerasing homomorphism of monoid $A^*$,
(\ie $\tau^{-1}(\lambda)=\{\lambda\}$ and $\tau(uv)=\tau(u)\tau(v)$, for all $u,v\in A^*$).
\item Substitution $\tau$ yields a dynamical system, denoted by $\overline\tau$, and defined over $A^\N$ by: $$\overline{\tau}(z)=\tau(z_0)\tau(z_1)\tau(z_2)\tau(z_3)\ldots,\forall z\in A^\N.$$
\item The \emph{lower norm} $\minf{\tau}$ and \emph{upper norm} $\maxf{\tau }$ of $\tau$ are defined by: 
$$\minf{\tau}=\min\sett{\len{\tau(a)}}{a\in A}\text{ and }\maxf{\tau}=\max\sett{\len{\tau(a)}}{a\in A}.$$
We say that  $\tau$ is \emph{uniform} if $\minf{\tau}=\maxf{\tau}.$
\item The matrix $M(\tau)=(M(\tau)_{ab})_{a,b\in A}$ is defined such that $M(\tau)_{ab}$ is the number $\len{\tau(a)}_b$ of occurrences of $b$ in $\tau(a)$.
$M_\tau$ can be written in block-triangular form.
An \emph{irreducible component} (or matrix block) is a maximal subalphabet $A'$ such that for all $a,b\in A'$, $b$ appears in $\tau^t(a)$, for some $t\in\N$.
A letter $a\in A$ \emph{can reach} a component $ A''$ if some $b\in A''$ appears in $\tau^t(a)$, for some $t\in\N$.
A {component} $ A'$ \emph{can reach} another one $ A''$ if some $a\in A'$ can reach it.
A component is \emph{terminal} if it cannot reach any distinct component.
A component is \emph{maximum} if the spectral radius of the corresponding submatrix is maximal, among all components.
We will denote $\maxal\tau\subset A$ the union of all components which can reach a maximal component. %letters $a\in A$ for which there exist $b\in A$ and $t\in\N$ such that $b$ appears in $\tau^t(a)$ and the spectral radius of the irreducible block containing $b$ is maximal (among all blocks of the matrix).
$\tau$ is \emph{irreducible} if there is only one irreducible component.
A classical object of study si the \emph{primitive} substitutions, which are irreducible, as well as all their powers (equivalently its matrix $M(\tau)$ is primitive, \ie $\exists n\in \N$, $\forall a,b\in A$ we have $M^n(\tau)_{ab}>0$).
\end{enumerate}
\end{defe}
\begin{ex}\label{x:subst}
Let $A=\{0,1\}$.
\begin{enumerate}
\item The \emph{Thue-Morse} substitution defined over $A$ by: 
\begin{center}
$\begin{array}{r c l}
\tau : 0 & \mapsto & 01 \\
1 & \mapsto & 10
\end{array}
\qquad
M(\tau)=
\begin{bmatrix}
1&1\\
1&1
\end{bmatrix}$
\end{center}
This is an primitive uniform substitution.
\item The \emph{Fibonacci} substitution defined over $A$ by: 
\[\begin{array}{r c l}
\tau : 0 & \mapsto & 01 \\
1 & \mapsto & 0
\end{array}
\qquad
M_\tau=
\begin{bmatrix}
0&1\\
1&1
\end{bmatrix}\qquad
M_\tau^2=
\begin{bmatrix}
1&1\\
1&2
\end{bmatrix}\]
This is an primitive nonuniform substitution: $\minf{\tau}=1<2=\maxf{\tau}$.
\item The \emph{doubling} substitution defined over $A$ by: 
\[\begin{array}{r c l}
\tau : 0 & \mapsto & 00 \\
1 & \mapsto & 11
\end{array}\]
This is a uniform reducible substitution: $\{0\}$ and $\{1\}$ are two disjoint invariant subalphabets.
\item\label{i:cantor}
	A uniform substitution $\tau$ is T\oe plitz if $\exists i\in\co0{\maxf\tau},\forall a,b\in A,\tau(a)_i=\tau(b)_i$.
An example is the Cantor substitution, defined over $A$ by: 
\begin{center}
	$\begin{array}{r c l}
		\tau : 0 & \mapsto & 010 \\
		1 & \mapsto & 111
	\end{array}$
\end{center}
This is a reducible uniform substitution: $\{1\}$ is an invariant subalphabet. %$\forall n\in \N,\len{\tau^n(1)}_0=0$.
\end{enumerate}
\end{ex}
	%	\[\tau=
	%	\left\{
	%	\begin{array}{r c l}
	%		0 & \mapsto & 0010 \\
	%		1 & \mapsto & 1010
	%	\end{array}
	%	\right.\]

The following is folklore, direct consequence from Perron-Frobenius theory (see for example \cite[Theorem~A.72]{kurka2003topological}).
\begin{rem}\label{r:pfrob}
	Let $\tau$ be a substitution and $t\in\N$.
	Clearly, $M(\tau)^t_{a,b}$ is the number of occurrences of $b$ in $\tau^t(a)$, and $\sum_{b\in\co0n}M(\tau)^t_{a,b}=\len{\tau^t(a)}$.
	Let $\tau$ be a substitution, $a\in A$, and $\rho_a$ the maximal spectral radius of the irreducible components that $a$ can reach.
	Then there exist $\alpha_a\in\R_+^*$ such that $\len{\tau^t(a)}=\sum_{b\in A}M(\tau)^t_{a,b}\sim_{t\to\infty}\alpha_a\rho_a^t$.
	In particular, $\maxal\tau$ is exactly the set of letters $a\in A$ such that $\rho_a$ is the spectral radius $\rho_+$ of the matrix, \ie the letters that grow the fastest.
\end{rem}
%\begin{proof}
%	
%\end{proof}

\subsection{Dill maps}
The dill maps were defined in \cite{salo2015block}, and generelize both substitutions and CA. 
Here we give a simple definition, which is equivalent to \cite[Definition~2]{salo2015block}.
\begin{defe}~\begin{itemize}
\item A \emph{dill map} $F$ with diameter $\diam\in \Ns$ is a dynamical system over the set of infinite words such that there exists a \emph{local rule} $f~:~A^{\diam}\to A^+$ satisfying:
\[\forall x\in A^\N,F(x)=f(x_{\co{0}{\diam}})f(x_{\co{1}{\diam+1}})f(x_{\co{2}{\diam+2}})\cdots.\]
\item The \emph{lower norm} $\minf{f}$ and the \emph{upper norm} $\maxf{f }$ of a dill map $F $ with diameter $\diam$ and local rule $f$ are defined by: 
$$\minf f=\min\sett{\len{f(u)}}{u\in A^\diam} \text{ and } \maxf f=\max\sett{\len{f(u)}}{u\in A^\diam}.$$
\item
We extend the local rule into a self-map $f^*:A^*\to A^*$ by: 
\[f^*(u)=
f(u_{\co{0}{\diam}})f(u_{\co{1}{1+\diam}})\ldots f(u_{\co{l}{l+\diam}}),\]
for $u$ such that $\len u\ge \diam$ and $f^*(u)=\lambda$ if $\len u<\diam$.
\item
We also consider the \emph{cocycle} $\cocy[n]{x}=\sum_{j\leq n}\len{f(x_{\co{j}{j+\diam}})}=\len{f^*(x_{\co{0}{n+\diam}})}$ for $n\in\N$ and $x\in A^\N$, which represents the position in which one can read the image at offset $n$.
\item
If $\maxf f=\minf f$, then we say that $F$ is \emph{uniform}.
In that case, the cocycle $\cocy[n]x$ does not depend on the infinite word $x$, and we note it $\cocy[n]{}$.
\end{itemize}\end{defe}
When it is clear from the context, we may identify a dill map with its local rule.
\begin{rem}~
\begin{enumerate}
\item The substitutions are the dill maps with diameter $\diam=1$.
\item The cellular automata are the uniform dill maps with $\minf f=\maxf f=1$. % with diameter $\diam$ and local rule $f$ is a uniform dill map with $. with diameter $\diam=\diam$ and the same local rule $f$ such that for all $u\in A^{\diam}$ we have $f(u)\in A$.
\item The composition of a substitution $\tau$ and a cellular automaton local rule $f$ with diameter $\diam$ is a dill map local rule $\tau\circ f$ with diameter $\diam$.
Actually, every dill map is the compositions of a substitution and a shift homomorphism (which is like a cellular automaton, but allowing to change the alphabet).
\end{enumerate}
\end{rem}
\begin{ex}
Let $f$ be the local rule of the XOr CA and $\tau$ be the Fibonacci substitution. Then $\tau\circ f$ is a local rule of a dill map with diameter $2$ and defined as follows:
\begin{center}
$
\begin{array}{r c l}
\tau\circ f: 00,11 &\mapsto & 01 \\
10,01 &\mapsto & 0
\end{array}$
\end{center}
\end{ex}
%\begin{ex}
%Every CA is a uniform dill map with norm $1$.
%\end{ex}
\begin{rem}
For all $x\in A^\N$, $n\mapsto\cocy[n]x$ is a one-to-one function.
\end{rem}
\begin{proof}
	For $n\leq m\in \N$ such that $\cocy[n]{x}=\cocy[m]{x}$ we have: 
$$\sum_{i\leq m}\len{f(x_{\co{i}{i+s}})}-\sum_{i\leq n}\len{f(x_{\co{i}{i+s}})}=\sum_{n< i\leq m}\len{f(x_{\co{i}{i+s}})}=0$$
Then for all $n< i\leq m$ we have $\len{f(x_{\co{i}{i+s}})}=0$.
Since $f$ is nonerasing then $n=m$. %Hence, the map $\varphi$ is one-to-one function.
\end{proof}
Similarly to the case of cellular automata, we give a characterization of dill maps à la Hedlund.
\begin{teo}\label{dill cant}
A function $F:A^\N\to A^\N$ is a dill map if and only if it is continuous over the Cantor space and there exists a continuous map $s:A^\N\to \N;x\mapsto\cocy x$ such that for all $x\in A^\N$: $F(\sigma(x))=\sigma^{\cocy{x}}(F(x))$.
\end{teo}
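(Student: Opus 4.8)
The claim has two directions. The easy direction is that a dill map satisfies the stated properties. Let me think about the forward direction first. If $F$ has local rule $f$ with diameter $\diam$, then $F(x)$ is built by reading $f$ on the windows $x_{\co{0}{\diam}}, x_{\co{1}{\diam+1}}, \dots$. Continuity over the Cantor space is immediate: the prefix of $F(x)$ of length $N$ depends only on finitely many coordinates of $x$ (any window contributing to those $N$ symbols involves coordinates below some bound depending on $N$ and $\minf f \geq 1$). For the cocycle map, set $s(x) = \cocy{x} = \len{f(x_{\co{0}{\diam+1}})} = \len{f(x_0\cdots x_{\diam})}$, wait—I need to check the indexing: $\cocy[n]{x} = \sum_{j\le n}\len{f(x_{\co j{j+\diam}})}$, so $\cocy[0]x = \len{f(x_{\co 0\diam})}$ and more generally the image at offset $n$ starts at position $\cocy[n-1]x$. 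Shifting $x$ by one deletes the first window, so $F(\sigma x) = f(x_{\co 1{\diam+1}})f(x_{\co 2{\diam+2}})\cdots$, which is exactly $F(x)$ with its first $\len{f(x_{\co 0\diam})}$ symbols removed, i.e. $\sigma^{\cocy x}(F(x))$ where $\cocy x := \cocy[0]x = \len{f(x_{\co 0\diam})}$. This $s$ is continuous (even locally constant, depending only on $x_{\co 0\diam}$) and $\N$-valued with $s \geq 1$.

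The substantive direction is the converse. Assume $F$ is continuous and there is a continuous $s:A^\N\to\N$, $x\mapsto\cocy x$, with $F(\sigma x) = \sigma^{\cocy x}(F(x))$ for all $x$. Since $s$ is continuous on the compact Cantor space into the discrete $\N$, it is locally constant, hence depends only on a bounded-length prefix of $x$; let $\diam$ be (at least) that prefix length, and also large enough that the first symbol $F(x)_0$ depends only on $x_{\co 0\diam}$ (using continuity of $F$). I then want to define $f:A^\diam\to A^+$ by reading off an appropriate block of $F(x)$ for any $x$ extending a given window $u$. Concretely, for $u\in A^\diam$ pick any $x$ with $x_{\co 0\diam}=u$ and set $f(u) := F(x)_{\co 0{\cocy x}}$, the prefix of $F(x)$ of length $s(x)$. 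The key verification is that this is well-defined, i.e. independent of the chosen extension $x$, and that iterating the functional equation reconstructs $F$. The well-definedness and reconstruction both come from unfolding $F(\sigma^k x) = \sigma^{\cocy{\sigma^{k-1}x}}\cdots\sigma^{\cocy x}(F(x))$, which shows that the block of $F(x)$ sitting between positions $\cocy[k-1]x$ and $\cocy[k]x$ equals the prefix of $F(\sigma^k x)$ of length $s(\sigma^k x)$, hence depends only on $x_{\co k{k+\diam}}$; this is precisely $f(x_{\co k{k+\diam}})$, giving $F(x) = f(x_{\co 0\diam})f(x_{\co 1{1+\diam}})\cdots$. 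To nail down well-definedness at $k=0$: if $x,x'$ agree on $\co 0\diam$ then $s(x)=s(x')$, and $F(x),F(x')$ agree on their length-$s(x)$ prefixes because that prefix is determined by finitely many coordinates inside the window $\co 0\diam$ — which is where I must be careful to have chosen $\diam$ large enough, possibly enlarging it so that the prefix of $F(x)$ of length $\maxf s := \max_x s(x)$ (finite, by compactness) is determined by $x_{\co 0\diam}$.

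The main obstacle is the bookkeeping around the choice of a single diameter $\diam$ that simultaneously (a) makes $s$ locally constant with window $\diam$, and (b) makes the length-$s(x)$ prefix of $F(x)$ a function of $x_{\co 0\diam}$ only; both are finitary by continuity plus compactness, but one must present the argument so that enlarging $\diam$ does not break the functional equation's telescoping — it does not, since the equation $F(\sigma x)=\sigma^{\cocy x}(F(x))$ is independent of any diameter. A secondary point to state cleanly is that $f(u)\in A^+$ (nonempty) because $s(x)\geq 1$, and that $\minf f\geq 1$, so the resulting object is genuinely a dill map in the sense of the definition. Once the local rule is extracted and shown to reproduce $F$, there is nothing left to prove.
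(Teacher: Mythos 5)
Your plan is correct and follows essentially the same route as the paper: the forward direction by direct computation with $\cocy{x}=\len{f(x_{\co{0}{\diam}})}$, and the converse by using compactness to make $s$ locally constant with finitely many values, choosing a single diameter via continuity of $F$, defining $f(u)$ as the length-$s(x)$ prefix of $F(x)$, and telescoping the functional equation to reconstruct $F$. If anything, your explicit insistence that $\diam$ be enlarged so that both $s(x)$ and the length-$\max_x s(x)$ prefix of $F(x)$ depend only on $x_{\co{0}{\diam}}$ is slightly more careful on the well-definedness of the local rule than the paper's write-up.
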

\begin{proof}~\begin{itemize}
\item[$"\Rightarrow"$] Let $F$ be a dill map with diameter $\diam$ and local rule $f$. For $x\in A^\N$, $\varepsilon=2^{-p}$ for $p\in \N^*$ we take $m=\min\sett{i\in \N}{\cocy[i]{x}\geq p}$.
For $\delta=2^{-m}$ and $y\in A^\N$ such that $\dc(x,y)\leq \delta$ we have $x_{\co{0}{m}}=y_{\co{0}{m}}$.
Then $f^*(x_{\co{0}{m}})=f^*(y_{\co{0}{m}})$. Hence $F(x)_{\co{0}{p}}=F(y)_{\co{0}{p}}.$
So, $\dc(F(x),F(y))\leq 2^{-p}=\varepsilon$.
In conclusion, $F$ is continuous.
Now let us define $s(x)=\cocy{x}=\len{f(x_{\co{0}{\diam}})}$ for all $x\in A^\N.$
Let $x\in A^\N$ and $\varepsilon>0$. 
For $y\in A^\N$ such that $\dc(x,y)< 2^{-\diam}$ we have $x_{\co{0}{\diam}}=y_{\co{0}{\diam}}$.
Then $f(x_{\co{0}{\diam}})=f(y_{\co{0}{\diam}})$ and hence $\len{f(x_{\co{0}{\diam}})}=\len{f(y_{\co{0}{\diam}})}$.
So $d(\cocy{x},\cocy{y})=0$.
In conclusion, $s$ is continuous, and it satisfies $F(\sigma(x))=\sigma^{\cocy{x}}(F(x))$.\\
\item[$"\Leftarrow "$] We suppose now that there exists a continuous map $s:A^\N\to \N$.
We can write $A^\N=\bigcup_{n\in \N}s^{-1}(\{n\})$. 
Since $\{n\}$ is clopen set for all $n\in \N$ and $s$ is continuous, then $s^{-1}(\{n\})$ is open for all $n\in \N$.
On the other hand, $(A^\N, \dc)$ is compact, so that there exists a finite set $I\subset\N$ such that $A^\N=\bigcup_{i\in I}s^{-1}(\{i\})$.
Let $x\in A^\N$; then there exists $i\in I$ such that $x\in s^{-1}(n_i)$. 
Since $F$ is continuous, for $\varepsilon=2^{-n_i}$, there exists $r_i$ such that for all $y\in A^\N$ verifying $\dc(x,y)<2^{-r_i}$ we have $\dc(F(x),F(y))< \varepsilon$.
Then for all $x\in A^\N$, there exists $i\in \{1,\cdots,p\}$ and $r_i$ such that for all $y\in A^\N$ verifying $\dc(x,y)<2^{-r_i}$ we have $\dc(F(x),F(y))< 2^{-n_i}$.
Let $\diam=\max\sett{r_i}{i\in \{1,\cdots,p\}}$. 
Hence for all $x\in A^\N$, there exists $i\in \{1,\cdots,p\}$ such that for all $y\in A^\N$ with $x_{\co{0}{\diam}}=y_{\co{0}{\diam}}$, we have $F(x)_{\co{0}{n_i}}=F(y)_{\co{0}{n_i}}$ with $n_i=\cocy{x}$. \\
So there exists a map $f~:~A^\diam\to A^*$ such that $f(x_{\co{0}{\diam}})=F(x)_{\co{0}{n_i}}$, for all $x\in A^\N$.\\
On the other hand, for $x\in A^\N$ and $j\in \N$ we have:
$$f(x_{\co{j}{j+\diam}})=f(\sigma^j(x)_{\co{0}{\diam}})=F(\sigma^j(x))_{\co{0}{n_i}}, \text{ with } n_i=\cocy{\sigma^j(x)}.$$
Hence, by induction we find: 
$$F(\sigma^j(x))=F(\sigma(\sigma^{j-1}(x)))=\sigma^{\cocy{\sigma^{j-1}(x)}}F(\sigma^{j-1}(x))=\sigma^{m}F(x), \text{ with } m=\sum_{h=0}^{j-1}\cocy{\sigma^h(x)}.$$
Then : $f(x_{\co{j}{j+\diam}})=\sigma^{m}F(x)_{\co{0}{n_i}}=F(x)_{\co{m}{m+n_i}}.$\\
Finally, we find that $F(x)=f(x_{\co{0}{\diam}})f(x_{\co{1}{1+\diam}})f(x_{\co{2}{2+\diam}})\cdots.$
In conclusion, $F$ is a dill map.
\popQED\end{itemize}\end{proof}
\begin{cor}\label{shift inv}
Let $F$ be a dill map with local rule $f$. Then for all $x\in A^\N$, all $n\in \N$ we have:
$$F\circ\sigma^n(x)=\sigma^{\cocy[n]{x}} \circ F(x).$$
\end{cor}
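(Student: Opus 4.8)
The statement is a routine iteration of the one-step relation $F(\sigma(x))=\sigma^{\cocy{x}}(F(x))$ supplied by Theorem~\ref{dill cant}, and I would prove it by induction on $n$. The base case is Theorem~\ref{dill cant} itself (and the case $n=0$ is the tautology $F(x)=F(x)$, with an empty cocycle).

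For the inductive step, suppose $F(\sigma^n(x))=\sigma^{\cocy[n]{x}}(F(x))$ for a given $n$ and all $x$. Applying Theorem~\ref{dill cant} at the point $\sigma^n(x)$ gives
\[F\bigl(\sigma^{n+1}(x)\bigr)=F\bigl(\sigma(\sigma^n(x))\bigr)=\sigma^{\cocy{\sigma^n(x)}}\bigl(F(\sigma^n(x))\bigr),\]
and substituting the induction hypothesis, together with $\sigma^a\circ\sigma^b=\sigma^{a+b}$, yields
\[F\bigl(\sigma^{n+1}(x)\bigr)=\sigma^{\cocy{\sigma^n(x)}}\Bigl(\sigma^{\cocy[n]{x}}\bigl(F(x)\bigr)\Bigr)=\sigma^{\cocy{\sigma^n(x)}+\cocy[n]{x}}\bigl(F(x)\bigr).\]
It then remains to recognise the exponent as $\cocy[n+1]{x}$, i.e. to check the additivity of the cocycle, $\cocy[n+1]{x}=\cocy[n]{x}+\cocy{\sigma^n(x)}$. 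This follows at once from the definition of the cocycle as a sum of the lengths $\len{f(x_{\co j{j+\diam}})}$ of the successive local images, using $(\sigma^n x)_i=x_{n+i}$ and hence $\cocy{\sigma^n(x)}=\len{f\bigl((\sigma^n x)_{\co0{\diam}}\bigr)}=\len{f(x_{\co n{n+\diam}})}$.

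Alternatively, one can skip the induction and unwind the definitions directly: $F(\sigma^n(x))$ is, by definition of a dill map, the concatenation $f((\sigma^n x)_{\co0{\diam}})f((\sigma^n x)_{\co1{1+\diam}})\cdots=f(x_{\co n{n+\diam}})f(x_{\co{n+1}{n+1+\diam}})\cdots$, which is exactly the infinite word obtained from $F(x)=f(x_{\co0{\diam}})f(x_{\co1{1+\diam}})\cdots$ by deleting the prefix $f(x_{\co0{\diam}})\cdots f(x_{\co{n-1}{n-1+\diam}})$; since that prefix has length $\cocy[n]{x}$ and deleting a prefix of length $k$ is the same as applying $\sigma^k$, we are done. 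I do not anticipate any real obstacle here: it is a short consequence of Theorem~\ref{dill cant}, and the only point needing a little care is the index bookkeeping that makes the shift exponent come out as $\cocy[n]{x}$ exactly.
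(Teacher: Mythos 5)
Your induction on $n$ via the one-step relation $F(\sigma(x))=\sigma^{\cocy{x}}(F(x))$ from Theorem~\ref{dill cant}, identifying the exponent as $\cocy[n]{x}=\sum_{k=0}^{n-1}\cocy{\sigma^k(x)}$, is exactly the paper's proof, and it is correct. Your alternative direct unwinding of the concatenation is a nice (equally valid) shortcut the paper does not take, but it does not change the substance.
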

\begin{proof}
Let $F$ be a dill map with diameter $\diam$ and local rule $f$. Then according to Theorem \ref{dill cant} there exists a continuous map $s: A^\N\to \N$ such that for all $x\in A^\N$: $F(\sigma(x))=\sigma^{\cocy{x}}(F(x))$.
We aim to prove by induction that for all $x\in A^\N$, for all $n\in \N$ there exists $\cocy[n]{x}\in \N$ such that: 
$$	F\circ\sigma^n=\sigma^{\cocy[n]{x}} \circ F.$$ 
Let $x\in A^\N$. For $k=1$ we have $F(\sigma(x))=\sigma^{\cocy{x}}(F(x))$. 
We suppose that our statement is true for $k=n$, so there exists $\cocy[n]{x}\in \N$ such that $F\sigma^n(x)=\sigma^{\cocy[n]{x}} F(x)$.
Now, for $k=n+1$ we have:
$$F\circ\sigma^{n+1}(x)=F(\sigma(\sigma^n(x)))=\sigma^{\cocy{\sigma^n(x)}}(F(\sigma^n(x))) =\sigma^{\cocy{\sigma^n(x)}}(\sigma^{\cocy[n]{x}}(F(x)))= \sigma^{\cocy[n+1]{x}}(F(x)).$$
Which is the next step of the induction hypothesis.\\
Hence, for all $x\in A^\N$, all $n\in \N$ and for $\cocy[n]{x}=\sum_{k=0}^{n-1}\cocy{\sigma^k(x)}$ we have: 
$$F(\sigma^n(x))=\sigma^{\cocy[n]{x}}(F(x)).\popQED$$
\end{proof}
\subsection{The Besicovitch space}
In this subsection, we recall the definition and topological properties of Besicovitch space. %, then we give some results about the dynamics of cellular automata in this space. 

Recall that a distance is an application over $A^*\times A^*$ to $\R_+$ satisfying: separation, symmetry, and the triangle inequality.
%We will use the notion of a \emph{pseudo-metric} which is an application satisfying the same properties as a distance except the separation property, in fact distincts words can have zero distance. 
We can endow the set $A^n$ for $n\in \N$ with distance.
The prototypical example is the \emph{Hamming distance} denoted by $d_H$.
It is usually defined as the number of differences between two finite words of the same length.
Let us present a definition in terms of edit distance, that is a number of operations from a specific kind to transfor a word into another one.
\begin{defe}\label{d:hamming}~\begin{itemize}
\item The \emph{substitution} operations $S^a_j$ at position $j\in\co0{\len u}$, for $a\in A$, are defined over finite word $u\in A^*$ as follows:
$S^a_j(u)=u_0\cdots u_{j-1}au_{j+1}\cdots u_{\len u-1}$.
\item	Between two finite words with the same length $u,v$, we define the \emph{Hamming distance}:
\[ d_H(u,v)=\min \sett{m\in \co{0}{\len u}}{\exists j_1<j_2<\cdots <j_m, (a_j)_{j\le m}\subset A^m,S^{a_1}_{j_1}\circ \cdots S^{a_n}_{j_m}(u)=v}.\]
\end{itemize}\end{defe}
Of course, the definition would be completely equivalent by allowing substitutions to be performed on both $u$ and $v$, and is simply the number of differences, letterwise.

\begin{rem}
	Note that this distance is an \emph{additive distance} \ie for all $u,u',v,v'$ such that $\len u=\len{u'}$ and $\len v=\len{v'}$,
 \[d_H(uv,u'v')=d_H(u,u')+d_H(v,v').\]
\end{rem}

\begin{defe}
The Besicovitch pseudo-metric, denoted by $\db$, is defined as follows:
$$\forall x,y\in A^\N, \db(x,y)=\limsup_{l\to \infty} \frac{d_H(x_{\co{0}{l}},y_{\co{0}{l}})}{l}.$$
\end{defe}
It is easy to verify that this is a pseudo-metric: it is symmetric, zero over diagonal pairs, and satisfies the triangular inequality.
On the other hand, it is not a distance since we can find two different infinite words where the pseudo-metric is worth zero (for instance, we can take two infinite words with finitely many of differences).
Hence, it is relevant to quotient the space of infinite words by the equivalence of zero distance, in order to get a separated topological space, as mentioned in the following definition:
\begin{defe}
%Let $A$ be an alphabet and let $X=A^\N$.
\begin{itemize}
	\item The relation  $x \sim_{\db} y \iff \db(x,y)=0$, is an equivalence relation.
	\item The quotient space $A^\N_{/\sim_{\db}}$ is a topological space, called the Besicovitch space denoted by $X_{\db}$.
	\item We denote by $x_{\db}$ the equivalence class of $x\in A^\N$ in the quotient space.
	\item Any map $F:A^\N\mapsto A^\N$ such that $\db(x,y)=0\implies\db(F(x),F(y))$ induces a well-defined map $F_{\db}: X_{\db}\to X_{\db}$ over Besicovitch space.
\end{itemize}\end{defe}
According to \cite{blanchard_cellular_1997}, the Besicovitch space is pathwise-connected, infinite-dimensional and complete, but, it is neither separable nor locally compact.
%The Weyl space shares all this properties except completness \cite{downarowicz1988quasi}.

\subsection{The Feldman space}
Another classical edition distance is the Levenshtein distance \cite{levenshtein1966binary}.
Instead of allowing to edit finite words only via substitution operations (like for the Hamming distance), we now allow to edit using \emph{deletions}.

\begin{defe}~\begin{itemize}
\item The deletion operation $D_j$ at position $j\in \co{0}{\len{u}}$ is defined over word $u\in A^*$ as follows:
$D_j(u)= u_0u_1\ldots u_{j-1}u_{j+1}\ldots u_{\len{u}-1}$, for all $u\in A^*$.
\item The Levenshtein distance $\dll$ is defined over $u,v\in A^*$ as follows:
\[\dll(u,v)=\frac{1}{2}\min\sett{m+m'}{\exists j_1<\cdots <j_m, j'_1<\cdots <j'_{m'}, D_{j_1}\circ\ldots \circ D_{j_m}(u)=D_{j'_1}\circ\ldots \circ D_{j'_{m'}}(v)}.\]
\end{itemize}\end{defe}
Most frequently, we will consider the distance between two words of the same length, so that the result is an integer, and can be defined as the minimal $m$ such that there exist two sequences $D_{j_1}\ldots D_{j_m}$ and $D_{j'_1}\ldots D_{j'_{m}}$ such that $D_{j_1}\circ\ldots \circ D_{j_m}(u)=D_{j'_1}\circ\ldots \circ D_{j'_{m}}(v)$.

The pseudo-metric $\dll(u,v)$ can also be defined as $\frac{\len u+\len v}2-l$, where $l$ is the length of the longest common subword between $u$ and $v$.

Several variants exist in the literature:
\begin{itemize}
	\item One may want to remove factor $\frac12$ in the definition, to make the definition look more natural.
	Nevertheless, the two points above, as well as the next two remarks, motivate our definition.
	Anyway, the two pseudo-metric $\dll$ and $2\dll$ are equivalent.
	\item If one allows two edition operations, insertion and deletion, the purpose could be that it can be defined by performing all operations only on one of the two words.
	The two pseudo-metrics are here exactly equal because an insertion on one side corresponds to a deletion on the other side.
	Manipulations are a little more technical because one has to deal with as many insertion operations as there are letters in the alphabet.
	\item If one additionnally allows the substitution operation from Definition~\ref{d:hamming}, with weight $1$, then again the two obtained pseudo-metrics are equal, because a substitution corresponds to a sequence of an insertion and a deletion.
	\item If one gives the same weights to the substition and deletion operations, then one gets an equivalent pseudo-metric (bounded between $\dll$ and $2\dll$).
\end{itemize}	

\begin{ex}
Let $A=\{0,1\}$.
\begin{enumerate}
\item For $u=010101$ and $v=101010$, we have : $\dll(u,v)=1$. 
\\ Indeed, $D_0(u)=10101$ (we delete the letter of index $0$ in $u$), then we delete the last letter in the end of the word $v$ and we find $D_0(u)=10101=D_5(v)$.
For the sake of comparison, note that $\dhh(u,v)=5$.
\item For $u=0000$ and $v=00001$, we have $\dll(u,v)=\frac{1}{2}$ since it is enough to delete the last lettre of $v$.
\end{enumerate}
\end{ex}
\begin{rem}\label{liv}
For every $u,v\in A^*$, we have:
\[\frac{\abs{\len u-\len v}}2\le \dll(u,v)\le \frac{\len u +\len v}2.\] 
\end{rem}
\begin{proof}
%	\firas{remplacer la première partie : il suffit d'écrire que la suite d'édition
The upper bound comes from the trivial edition sequence producing:
\[D_1\circ D_2\cdots D_{\len{u}}=\lambda=D_1\circ\ldots D_{\len{v}}.\]
%Let $u,v\in A^*$ such that $\dl(u,v)=\frac{n+m}{2}$, then by definition there exists $n\leq \len u$ and $m\leq \len v$ edit operations such that: 
%\[D_{i_1}\circ D_{i_2}\circ \cdots \circ D_{i_m}(u)=D_{j_1}\circ D_{j_2}\circ \cdots \circ D_{j_m}(v).\]
%Hence, \[\dll(u,v)=\frac{n+m}{2}\leq \frac{\len u + \len v }{2}.\]
%}
On the other hand, if \[D_{j_1}\circ D_{j_2}\circ \cdots \circ D_{j_m}(u)=D_{j'_1}\circ D_{j'_2}\circ \cdots \circ D_{j'_{m'}}(v),\] then \[\len{D_{j_1}\circ D_{j_2}\circ \cdots \circ D_{j_m}(u)}=\len{D_{j'_1}\circ D_{j'_2}\circ \cdots \circ D_{j'_{m'}}(v)}.\]
Hence, $\len u-m=\len v-m'$.
Then we can conclude that \[\frac{\abs{\len{u}-\len{v}}}{2}=\frac{\abs{m-m'}}2\le\frac{m+m'}2=\dll(u,v).\]
\end{proof}
\begin{rem}\label{r:hamlev}
The Hamming distance is an upper bound for the Levenshtein distance, \ie for all words $u,v\in A^*$ such that $\len u=\len v$, \[\dll(u,v)\leq\dhh(u,v).\]
\end{rem}
\begin{proof}
 Let $\dhh(u,v)=m$, then there exists $j_1<\cdots<j_m$ such that for all $h\in \cc1{m}$, $u_{j_h}\neq v_{j_h}$.
 If we delete $u_{j_h}$ and $v_{j_h}$ for all $h\in \cc1{m}$, then we find $ D_{j_1}\circ\ldots \circ D_{j_m}(u)=D_{j_1}\circ\ldots \circ D_{j_m}(v).$ Hence: 
$$\dll(u,v)\leq \frac{2m}{2}=n=\dhh(u,v).\popQED$$
\end{proof}
\begin{pro}\label{p:subadd}
The Levenshtein distance is subadditive, \ie for all words $u,v,u',v'$,
 \[\dll(uu',vv')\leq \dll(u,v)+\dll(u',v').\]
\end{pro}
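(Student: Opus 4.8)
The plan is to exploit the characterization of $\dll$ in terms of longest common subwords. Recall from the excerpt that $\dll(u,v)=\frac{\len u+\len v}{2}-l(u,v)$, where $l(u,v)$ denotes the length of a longest common subword (i.e.\ subsequence) of $u$ and $v$. With this reformulation, the inequality $\dll(uu',vv')\le\dll(u,v)+\dll(u',v')$ becomes, after clearing the common term $\frac{\len{uu'}+\len{vv'}}{2}=\frac{\len u+\len{u'}+\len v+\len{v'}}{2}$, simply the statement
\[
l(uu',vv')\ge l(u,v)+l(u',v').
\]
So the entire proof reduces to a superadditivity property of the longest common subsequence.

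To prove $l(uu',vv')\ge l(u,v)+l(u',v')$, I would take a longest common subword $w$ of $u$ and $v$ and a longest common subword $w'$ of $u'$ and $v'$, so $\len w=l(u,v)$ and $\len{w'}=l(u',v')$. Then $ww'$ is a subword of $uu'$: indeed, $w$ embeds into the prefix $u$ and $w'$ embeds into the suffix $u'$, and concatenating the two embeddings (which live in disjoint index ranges) gives an embedding of $ww'$ into $uu'$. Symmetrically, $ww'$ is a subword of $vv'$. Hence $ww'$ is a common subword of $uu'$ and $vv'$, so $l(uu',vv')\ge\len{ww'}=\len w+\len{w'}=l(u,v)+l(u',v')$, as required.

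If one prefers to argue directly with the edit-operation definition rather than invoking the longest-common-subword formula, the same idea works: take edit sequences $D_{j_1}\cdots D_{j_m}(u)=D_{j'_1}\cdots D_{j'_{m'}}(v)$ realizing $\dll(u,v)=\frac{m+m'}{2}$ and $D_{k_1}\cdots D_{k_p}(u')=D_{k'_1}\cdots D_{k'_q}(v')$ realizing $\dll(u',v')=\frac{p+q}{2}$. Because the Levenshtein distance (like the Hamming distance, by the earlier remark) is additive on concatenations in the sense that an edit sequence on $uu'$ can be split into one acting on the first $\len u$ positions and one acting on the last $\len{u'}$ positions, performing the first set of deletions on the $u$-part and the $u'$-deletions (shifted by $\len u$) on the $u'$-part of $uu'$, and correspondingly on $vv'$, yields $D(\,uu'\,)=D'(\,vv'\,)$ using $m+p$ deletions on the left and $m'+q$ on the right; hence $\dll(uu',vv')\le\frac{(m+p)+(m'+q)}{2}=\dll(u,v)+\dll(u',v')$. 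One must be slightly careful that the two index sets on the $uu'$ side are automatically disjoint and correctly ordered (the $u$-indices lie in $\co0{\len u}$, the $u'$-indices get shifted into $\co{\len u}{\len{uu'}}$ but also must be read relative to the partially-deleted word — it is cleanest to fix a left-to-right processing order), which is the only mildly technical bookkeeping point.

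The main obstacle, such as it is, is purely notational: making the "disjoint index ranges" argument fully rigorous when deletions are applied sequentially and positions shift. Using the longest-common-subword characterization sidesteps this entirely, so I would present the first argument as the main line and mention that it is equivalent to the concatenation-of-edit-sequences viewpoint. No compactness, limit, or Perron--Frobenius input is needed here — this is a finite combinatorial fact about words.
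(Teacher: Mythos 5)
Your proof is correct, but your main line of argument differs from the paper's. The paper proves subadditivity directly from the edit-operation definition: it takes minimal deletion sequences realizing $\dll(u,v)$ and $\dll(u',v')$ and concatenates the two edited words to produce a deletion sequence witnessing $\dll(uu',vv')\leq\frac{(m+m')+(n+n')}{2}$ --- essentially your second, alternative argument, including the index-shifting bookkeeping you flag as the only delicate point. Your primary route instead passes through the longest-common-subword characterization $\dll(u,v)=\frac{\len u+\len v}{2}-l(u,v)$ (which the paper does state, so you are entitled to it) and reduces the claim to the superadditivity $l(uu',vv')\ge l(u,v)+l(u',v')$, proved by concatenating embeddings of the two common subwords. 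This is arguably cleaner: it sidesteps entirely the question of how deletion positions shift when operations are composed, which is precisely where the paper's own write-up is at its most fragile notationally. The trade-off is that you rely on the equivalence between the edit definition and the subword characterization, which the paper asserts without proof; the paper's direct argument stays self-contained within the deletion-operation definition. Both approaches are sound, and your observation that they are really the same combinatorial fact viewed two ways is accurate.
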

\begin{proof}
Consider words $u,u',v,v'$, and $m,m',n,n'$ such that:
\[D_{j_1}\circ \cdots \circ D_{j_m}(u)=D_{j'_1}\circ \cdots \circ D_{j'_{m'}}(v)\text{ and }D_{j_{m+1}}\circ \cdots \circ D_{j'_{m+n}}(u')=D_{j'_{m'+1}}\circ \cdots \circ D_{j'_{m'+n'}}(v'),\]
for some minimal edition sequences $j_1<\cdots<j_{m}<\len u$, $j'_{1}<\cdots<j_{m'}<\len{u'}$, $j_{m+1}<\cdots<j_{m+n}\len v$ and $j'_{m'+1}<\cdots<j'_{m'+n'}<\len{v'}$, so that $\dll(u,u')=\frac{m+m'}2$ and $\dll(v,v')=\frac{n+n'}2$.
By concatenating the two previous edited words, we obtain:
\[D_{j_1}\circ \cdots \circ D_{j_{m+n}}(uu')=D_{j'_1}\circ \cdots \circ D_{j'_{m'+n'}}(vv').\]
Therefore $\dll(uu',vv')\leq \frac{m+n+m'+n'}{2}=\dll(u,v)+\dll(u',v')$.
\end{proof}

%CAS PARTICULIER DE LA SOUS-ADDITIVITÉ\begin{lem}\label{Lem uwv}
%For all $u,v\in A^*$ and for all $w,w'\in A^*$ : $\dll(uwv,uw'v)\leq\dll(w,w')\le\frac{\len{w}+\len{w'}}{2}.$
%\end{lem}
%\begin{proof}
%We can remark that : 
%$$D_{\co{\len{u}}{\len{u}+1}}\circ \cdots \circ D_{\co{\len{u}}{\len{u}+\len{w}}}(uwv)=uv=D_{\co{\len{u}}{\len{u}+1}}\circ \cdots \circ D_{\co{\len{u}}{\len{u}+\len{w'}}}(uw'v).$$
%Then :
%$$\dll(uwv,uw'v)\leq\frac{\len{w}+\len{w'}}{2}.\popQED
%$$
%\end{proof}
Following the idea behind the Besicovitch pseudo-metric, we define a pseudo-metric associated to the Levenshtein distance as follows:
\begin{defe}
%Let $A$ be an alphabet.
The \emph{ Feldman} pseudo-metric associated to the Levenshtein distance is: 
$$\dl(x,y)=\limsup_{l\to \infty} \frac{\dll(x_{\co{0}{l}},y_{\co{0}{l}})}{l}, \forall x,y\in A^\N.$$
%The \emph{sliding pseudo-metric} associated to the Levenshtein distance is: 
%$$\weyl{\dll}(x,y)=\limsup_{l\to \infty} \max_{k\in \N}\frac{\dll(x_{\co{k}{k+l}},y_{\co{k}{k+l}})}{l}, \forall x,y\in A^\N.$$
\end{defe}
Like the Besicovith pseudo-metric, it is a pseudo-metric but not a distance.
Actually all pairs at Besicovitch pseudo-metric $0$ are at Feldman pseudo-metric $0$.
More generally, Remark~\ref{r:hamlev} gives the following.
\begin{rem}
For all $x,y\in A^\N$ we have : $\dl(x,y)\leq \db(x,y).$
\end{rem}

Here too, it is natural to quotient the space of infinite words by the equivalence of zero distance; we obtain a metric space, called the \emph{Feldman space} denoted by $X_{\dl}$.

\section{Dill maps in the Besicovitch space}\label{s:bes}
\subsection{Lipschitz property of dill maps}
It is known since \cite{blanchard_cellular_1997} that every cellular automaton induces a (well-defined) Lipshitz function over Besicovitch space.
%\begin{defe}
%	A function $F_{\db}$ is a cellular automaton over Besicovitch space if there exists a cellular automaton $G :A^\N\to A^\N$ such that $G\in F_{\db}$ (\ie $\forall x\in A^\N, \forall y\in F_{\db}(x_{\db}), \db(G(x),y)=0$).
%\end{defe}
\cite[Theorem~13]{muller_curtishedlundlyndon_2009} goes further, by establishing a characterization à la Curtis-Hedlund-Lyndon of cellular automata in the Besicovitch space by three conditions: shift invariance, a condition in terms of uniform continuity and a condition in terms of periodic infinite words.

Some dill maps, on the contrary are not well-defined.
\begin{ex}\label{x:fibobes}
	The Fibonacci substitution is not well-defined over the Besicovitch space $X_\db$.\\
	Even worse, for every $x\in\{0,1\}^\N$ such that $\tilde x\ne\widetilde{\uinf1}$, altering simply the first letter will induce a shift in the substitution.
	Indeed, if $x_0=1$, then:
	\begin{align*}
		\db(\overline\tau(S^0_0(x)),\overline\tau(x))&=\db(0\tau(x),\tau(x)).
	\end{align*}
	Symmetrically, if $x_0=0$, then
	\begin{align*}
	\db(\overline\tau(S^1_0(x)),\overline\tau(x))&=\db(\sigma(\tau(x)),\tau(x)).
\end{align*}
	In both cases, the pseudo-metric	is at least half of the frequence $\db(\uinf1,x)$ of $0$s in $x$.
	For example, $\db(\uinf0,1\uinf0)=0$ but $\db(\overline{\tau}(\uinf0),\overline{\tau}(1\uinf0))=\db(\uinf{(01)},\uinf{(10)})=1$.
%	Hence this substitution is not well-defined over Besicovitch space.
	On the other hand, for all $x\in\{0,1\}^\N$, $\db(\overline\tau(x),\overline\tau(\uinf1))\le\db(x,\uinf1)$ (frequence of $0$s in $x$).
	This is still a quite poor continuity set.
\end{ex}

In this section, we characterize dill maps which induce a well-defined function over this space.

Let us note $\maxd=\max\sett{\dhh(f(u),f(v))}{u,v\in A^\diam}$, and $\mind=\min\sett{\dhh(f(u),f(v))}{u\ne v\in A^\diam}$.
\begin{pro}\label{p:subbesi}
Let $F$ be any uniform dill map with diameter $\diam$ and local rule $f$. Then:
\[\forall x,y\in A^\N,\frac{\diam\mind}{\minf{f}}\times \db(x,y)\leq \db(F(x),F(y))\leq \frac{\diam \maxd}{\minf{f}}\times \db(x,y).\]
\end{pro}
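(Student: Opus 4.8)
The plan is to use the uniformity of $F$ to compare prefixes of $F(x)$ and $F(y)$ block by block, and then exploit the additivity of the Hamming distance recalled after Definition~\ref{d:hamming}. Write $N=\minf f=\maxf f$. By uniformity, for every $z\in A^\N$ and every $l\in\N$ one has $F(z)_{\co0{(l+1)N}}=f^*(z_{\co0{l+\diam}})=f(z_{\co0\diam})\,f(z_{\co1{\diam+1}})\cdots f(z_{\co l{l+\diam}})$, a concatenation of $l+1$ words of length $N$. Applying this to $x$ and to $y$ and using additivity of $d_H$ across these $l+1$ aligned blocks gives
\[d_H\bigl(F(x)_{\co0{(l+1)N}},F(y)_{\co0{(l+1)N}}\bigr)=\sum_{j=0}^{l}d_H\bigl(f(x_{\co j{j+\diam}}),f(y_{\co j{j+\diam}})\bigr).\]

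I then bound the summands. The $j$-th term is $0$ when $x_{\co j{j+\diam}}=y_{\co j{j+\diam}}$, and otherwise lies between $\mind$ and $\maxd$. Let $W_l$ be the number of indices $j\in\co0{l+1}$ for which $x_{\co j{j+\diam}}\ne y_{\co j{j+\diam}}$. For each $i\in\co0l$ with $x_i\ne y_i$ the window of index $i$ contains the position $i$, hence differs, and distinct such $i$ give distinct windows; so $W_l\ge d_H(x_{\co0l},y_{\co0l})$. Conversely, each position lies in at most $\diam$ windows, so $W_l\le\diam\,d_H(x_{\co0{l+\diam}},y_{\co0{l+\diam}})$. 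Combining,
\[\mind\,d_H(x_{\co0l},y_{\co0l})\ \le\ d_H\bigl(F(x)_{\co0{(l+1)N}},F(y)_{\co0{(l+1)N}}\bigr)\ \le\ \diam\,\maxd\,d_H(x_{\co0{l+\diam}},y_{\co0{l+\diam}}).\]

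Now divide by $(l+1)N$ and take $\limsup$ as $l\to\infty$. Since $d_H(x_{\co0{l+\diam}},y_{\co0{l+\diam}})=d_H(x_{\co0l},y_{\co0l})+d_H(x_{\co l{l+\diam}},y_{\co l{l+\diam}})$ with last term at most $\diam$, and $\frac{l+\diam}{l+1}\to1$, both $\limsup_l\frac{d_H(x_{\co0{l+\diam}},y_{\co0{l+\diam}})}{(l+1)N}$ and $\limsup_l\frac{d_H(x_{\co0l},y_{\co0l})}{(l+1)N}$ are equal to $\frac1N\db(x,y)$. For the lower bound this suffices, since $\db(F(x),F(y))$ dominates the $\limsup$ of $\frac{d_H(F(x)_{\co0m},F(y)_{\co0m})}{m}$ along the subsequence of lengths $m=(l+1)N$; so $\db(F(x),F(y))\ge\frac{\mind}{N}\db(x,y)$. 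For the upper bound one first checks that this subsequence already realizes the full $\limsup$ over all $m$: for $kN\le m<(k+1)N$ one has $\abs{d_H(F(x)_{\co0m},F(y)_{\co0m})-d_H(F(x)_{\co0{kN}},F(y)_{\co0{kN}})}<N$, a bounded correction that disappears after dividing by $m$. The display above then gives $\db(F(x),F(y))\le\frac{\diam\,\maxd}{N}\db(x,y)$.

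I expect the main obstacle to be the precise multiplicative constant in the lower bound. The counting above only yields $W_l\ge d_H(x_{\co0l},y_{\co0l})$, hence the constant $\frac{\mind}{N}$, whereas the statement has $\frac{\diam\mind}{N}$. One is tempted to improve the count by observing that $\mind>0$ forces $f$ to be injective, so that a difference falling in the overlap of two consecutive windows makes both of them differ; but a run of $L$ consecutive differing positions still produces only about $L$ differing windows rather than $\diam L$, so the factor $\diam$ looks out of reach in general. I would therefore double-check this constant — note that for $\diam=1$ (substitutions) the argument above is clean and gives exactly $\frac{\mind}{N}$ and $\frac{\maxd}{N}$, matching the statement — and verify whether some additional hypothesis on $f$ is being used. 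The remaining issue, the $\limsup$ bookkeeping needed to pass from multiples of $N$ to arbitrary prefix lengths, is routine but a little fiddly.
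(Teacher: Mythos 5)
Your argument for the upper bound is exactly the paper's: decompose $F(x)_{\co{0}{(l+1)N}}$ into the $l+1$ aligned blocks $f(x_{\co{j}{j+\diam}})$, use additivity of $\dhh$, bound each nonzero block by $\maxd$, and overcount the affected windows by $\diam\,\dhh(x_{\co{0}{l+\diam}},y_{\co{0}{l+\diam}})$; your $\limsup$ bookkeeping along multiples of $N$ is also fine. Your suspicion about the lower bound is fully justified, and you should trust your counting over the stated constant. The paper's proof passes from $\sum_{i:\exists j\in\co{i}{i+\diam},\,x_j\ne y_j}\mind$ down to $\sum_{j:\,x_j\ne y_j}\sum_{i\in\oc{j-\diam}{j}}\mind=\diam\mind\,\dhh(\cdots)$ as if this were a lower bound, but the double sum counts each affected window once \emph{per differing position it contains}, so it dominates $\mind\cdot(\text{number of affected windows})$ rather than being dominated by it; the two agree only when no two differing positions lie within distance $\diam$ of one another. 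In fact the statement itself is false for $\diam\ge2$: take $A=\{0,1\}$, $\diam=2$, and
\[f(00)=000,\quad f(01)=011,\quad f(10)=101,\quad f(11)=110,\]
so that $\minf{f}=3$ and $\mind=\maxd=2$ (the images form the even-weight code, pairwise at Hamming distance exactly $2$). Then $\frac{\diam\mind}{\minf{f}}=\frac43$, and the claimed lower bound applied to $x=\uinf{0}$, $y=\uinf{1}$ (where $\db(x,y)=1$) would force $\db(F(x),F(y))\ge\frac43>1$, which is absurd; directly, $F(\uinf{0})=\uinf{(000)}$ and $F(\uinf{1})=\uinf{(110)}$ give $\db(F(x),F(y))=\frac23=\frac{\mind}{\minf{f}}$. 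So your constant $\frac{\mind}{\minf{f}}$ is the correct one (and is attained in this example); the factor $\diam$ must simply be deleted from the lower bound, and no additional hypothesis on $f$ rescues it. Be aware that the error propagates to the isometry criterion $\diam\mind=\minf{f}$ in point 3 of Corollary~\ref{equi besico}, although for substitutions ($\diam=1$) — the only case the paper actually invokes for the lower bound — your bounds and the paper's coincide and everything downstream is unaffected.
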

\begin{proof}
Let $x,y\in A^\N$ and $l\in \N$. It is clear that we can write $l=|f|m+r$ such that $m\in\N$ and $r<\minf f$, then : 
\begin{eqnarray*}
F(x)_{\co{0}{l}}&=& f(x_{\co{0}{\diam}})f(x_{\co{1}{1+\diam}})\ldots f(x_{\co{m}{m+\diam}})_{\co{0}{r}}, \text{ and }\\ \quad F(y)_{\co{0}{l}}&=& f(y_{\co{0}{\diam}})f(y_{\co{1}{1+\diam}})\ldots f(y_{\co{m}{m+\diam}})_{\co{0}{r}} .
\end{eqnarray*}
Hence if we note $d=\dhh(F(x)_{\co{0}{l}},F(y)_{\co{0}{l}})$, then:
\begin{eqnarray*}
 \sum_{i=0}^{m}\left( \dhh(f(x_{\co{i}{i+\diam}}),f(y_{\co{i}{i+\diam}}))\right) -r \leq & d &\leq  \sum_{i=0}^{m} \dhh(f(x_{\co{i}{i+\diam}}),f(y_{\co{i}{i+\diam}})) \\ 
\sum_{\begin{subarray}{c} i=0 \\ x_{\co{i}{i+\diam}}\neq y_{\co{i}{i+\diam}}\end{subarray}}^{m} \left(\dhh(f(x_{\co{i}{i+\diam}}),f(y_{\co{i}{i+\diam}}))\right) -\minf f  < & d & \leq \sum_{\begin{subarray}{c} i=0 \\ x_{\co{i}{i+\diam}}\neq y_{\co{i}{i+\diam}}\end{subarray}}^{m} \dhh(f(x_{\co{i}{i+\diam}}),f(y_{\co{i}{i+\diam}})) \\ 
 \sum_{\begin{subarray}{c} i=0 \\ \exists j\in \co{i}{i+\diam},  x_j\neq y_j\end{subarray}}^{m}\mind-\minf f < & d & \leq \sum_{\begin{subarray}{c} i=0 \\ \exists j\in \co{i}{i+\diam},  x_j\neq y_j\end{subarray}}^{m} \maxd\\
\sum_{\begin{subarray}{c}  j\in \co{\diam}{m+\diam}\\  x_j\neq y_j\end{subarray}}\sum_{i\in 
\oc{j-\diam }{j}}\mind-\minf f < & d &\leq  \sum_{\begin{subarray}{c}  j\in \co{0}{m+\diam}\\  x_j\neq y_j\end{subarray}}\sum_{i\in \oc{j-\diam }{j}} \maxd \\
\sum_{\begin{subarray}{c}  j\in \co{\delta}{m+\diam}\\  x_j\neq y_j\end{subarray}} \diam\mind-\minf f < & d &\leq  \sum_{\begin{subarray}{c}  j\in \co{0}{m+\diam}\\  x_j\neq y_j\end{subarray}} \diam  \maxd \\
\dhh(x_{\co{\diam}{m+\diam}},y_{\co{\diam}{m+\diam}}) \diam\mind -\minf f< & d &\leq \dhh(x_{\co{0}{m+\diam}},y_{\co{0}{m+\diam}})\diam\maxd\\
 \dhh(x_{\co{0}{m}},y_{\co{0}{m}})\diam\mind-\diam^2\mind -\minf f<& d & \leq \dhh(x_{\co{0}{m}},y_{\co{0}{m}})\diam\maxd+\diam^2\maxd.
\end{eqnarray*}
Hence: 
\begin{eqnarray*}		
\frac{ \dhh(x_{\co{0}{m}},y_{\co{0}{m}})\diam\mind-\diam^2\mind -\minf f}{l}<& \dfrac{d}{l} &\leq  \dfrac{\dhh (x_{\co{0}{m}},y_{\co{0}{m}})\diam\maxd+\diam^2\maxd}{l} \\ 
\dfrac{\diam\mind}{\len{f}} \times\dfrac{\dhh (x_{\co{0}{m}},y_{\co{0}{m}})-\diam^2\mind -\minf f}{m+1}< &\dfrac{d}{l} &\leq \dfrac{\diam\maxd}{\len{f}} \times\dfrac{\dhh (x_{\co{0}{m}},y_{\co{0}{m}})+\diam^2\maxd}{m}\\ 
\dfrac{\diam\mind}{\len{f}} \times\dfrac{\dhh (x_{\co{0}{m+1}},y_{\co{0}{m+1}})-\diam^2\mind -\minf f-1}{m+1}< &\dfrac{d}{l} &\leq \dfrac{\diam\maxd }{\len{f}} \times\dfrac{\dhh (x_{\co{0}{m}},y_{\co{0}{m}})+\diam^2\maxd}{m}.		
\end{eqnarray*}
Finaly, since $m\to \infty$ when $l\to \infty$, then we find: 
\[\frac{\diam\mind}{\minf{f}}\times \db(x,y)\leq \db(F(x),F(y))\leq \frac{\diam\maxd}{\minf{f}}\times \db(x,y).\popQED
\]
\end{proof}
In particular, the Cantor and Thue-Morse substitutions are well-defined over this space (we will discuss them in the next subsection).

We now reach necessary and sufficient conditions for dill maps to induce well-defined dynamical systems over this space. 
\begin{teo}\label{dill Besi}
Let $F$ be a dill map with diameter $\diam$ and local rule $f$. Then the following statements are equivalents: 
\begin{enumerate}
\item\label{i:def} $F_{\db}$ is well-defined.
\item\label{i:lip} $F$ is $\frac{\diam\maxd}{\minf f}$-Lipschitz with respect to $\db$. %, with $\max\sett{\dhh(f(u),f(v))}{u,v\in A^\diam}$.
\item\label{i:cst} $F$ is either constant or uniform.
\end{enumerate}
\end{teo}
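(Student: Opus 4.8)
The plan is to prove the cycle of implications $(\ref{i:cst})\Rightarrow(\ref{i:lip})\Rightarrow(\ref{i:def})\Rightarrow(\ref{i:cst})$, with the last one being the substantial part.

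For $(\ref{i:cst})\Rightarrow(\ref{i:lip})$: if $F$ is constant, it is trivially Lipschitz (with constant $0$, and the stated bound is vacuously fine since $\maxd=0$ too). If $F$ is uniform, the upper bound in Proposition \ref{p:subbesi} is exactly the assertion that $F$ is $\frac{\diam\maxd}{\minf f}$-Lipschitz with respect to $\db$. So this implication is immediate from the already-established proposition.

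For $(\ref{i:lip})\Rightarrow(\ref{i:def})$: any Lipschitz map $F$ satisfies $\db(x,y)=0\implies\db(F(x),F(y))=0$, so $F_{\db}$ is well-defined on the quotient $X_\db$; this is essentially definitional. (One should also note that a Lipschitz map on a pseudmetric space descends to a continuous — indeed Lipschitz — map on the quotient metric space, which is what "well-defined dynamical system" requires.)

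The hard implication is $(\ref{i:def})\Rightarrow(\ref{i:cst})$: we must show that if $F$ is neither constant nor uniform, then $F_{\db}$ is \emph{not} well-defined, i.e.\ there exist $x,y\in A^\N$ with $\db(x,y)=0$ but $\db(F(x),F(y))\ne 0$. The idea, generalizing Example \ref{x:fibobes}, is that non-uniformity means there are two words $u,v\in A^\diam$ with $\len{f(u)}\ne\len{f(v)}$; the non-constant hypothesis should guarantee that the local rule genuinely "sees" differences in a way that cannot be washed out. Concretely, I would pick a periodic word $x = \uinf{w}$ for a suitably chosen long block $w$ so that $F(x)$ is periodic, then build $y$ by changing $x$ on a set of positions of density zero (so $\db(x,y)=0$) — but chosen so that each such change replaces an occurrence of the pattern reading $u$ by one reading $v$ (or vice versa), each of which shifts all subsequent output by $\len{f(u)}-\len{f(v)}\ne 0$. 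Summing these shifts, the image $F(y)$ gets progressively desynchronized from the periodic $F(x)$: after $k$ such edits the output is shifted by $k\cdot|\len{f(u)}-\len{f(v)}|$, and on the long prefix of length $l$ this accumulated shift grows linearly in $l$, forcing $\db(F(x),F(y))>0$ (a drift of a periodic sequence against itself has positive Besicovitch distance unless the period divides the drift, which we can arrange to avoid, or we can simply choose the sparse positions with enough freedom). The main obstacle — and the place requiring care — is arranging that the sparse edits can actually be realized: we need a periodic configuration and a density-zero set of positions on which flipping letters turns local windows reading $u$ into windows reading $v$ without the overlapping windows interfering destructively, and we must handle the case where non-uniformity is "hidden" behind a constant-output block (this is where the non-constant hypothesis must be invoked, perhaps by first passing through a power $F^t$ or by a separate argument showing a constant $F$ is the only obstruction). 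One clean way to organize this: show that if $F$ is uniform then we are in case $(\ref{i:lip})$ and done; if $F$ is non-uniform and non-constant, exhibit $u\ne v$ with $\len{f(u)}\ne\len{f(v)}$ and also (using non-constantness) a position where the outputs differ, then construct $x$ as a concatenation $\prod_k B_k$ of longer and longer identical blocks and $y$ by editing a vanishing fraction inside each $B_k$, tracking the cocycle drift via Corollary \ref{shift inv} to compute $\db(F(x),F(y))$ explicitly as a positive number.
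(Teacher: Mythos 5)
Your treatment of (\ref{i:cst})$\Rightarrow$(\ref{i:lip}) and (\ref{i:lip})$\Rightarrow$(\ref{i:def}) matches the paper and is fine. For the substantial implication (\ref{i:def})$\Rightarrow$(\ref{i:cst}), however, what you have written is a strategy with the hardest step left open, and one of the quantitative claims along the way is wrong. First, the quantitative point: if you edit $x$ on a set of positions of density zero, the number of edits occurring before position $l$ is $o(l)$, so the accumulated cocycle drift is $o(l)$, not linear in $l$ as you assert. This does not doom the approach, because linear drift is not needed: a \emph{single} finite edit already shifts the entire tail of the image by a fixed $k=\len{f^*(u)}-\len{f^*(v)}\ne0$, and if the (eventually periodic) image is not invariant under the shift by $k$, the two images disagree on a set of positive density (at least one position per period), which already gives $\db(F(x),F(y))>0$. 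This is exactly what the paper does: it takes $u\uinf w$ versus $v\uinf w$ with $\len u=\len v$ and a common suffix of length $\diam-1$ (arranged by padding with $a^{\diam-1}$, which is also how one resolves your worry about overlapping windows), so that the two images are eventually translates of each other by $k$.

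The genuine gap is the case you explicitly defer: what happens when the shift by $k$ is invisible, i.e.\ when $f^*(w)_i=f^*(w)_{i+k}$ for every finite word $w$ and every valid $i$. You say this "is where the non-constant hypothesis must be invoked, perhaps by passing to a power $F^t$ or by a separate argument," but you do not supply that argument, and it is the crux of the implication. The paper closes it directly: if $f^*(w)_i=f^*(w)_{i+k}$ always holds, then for any long $w'$ and any $w$ one gets $f^*(w)_j=f^*(w'w)_{j+\len{f^*(w')}}=f^*(w')_{j+\len{f^*(w')}\bmod k}$, so every image $F(x)$ equals one and the same $k$-periodic word independent of $x$, i.e.\ $F$ is constant --- which is precisely alternative (\ref{i:cst}). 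Without this dichotomy (either some $w$ witnesses $f^*(w)_i\ne f^*(w)_{i+k}$, breaking well-definedness, or no $w$ does, forcing constancy), your argument does not establish the implication. I would also note that passing to a power $F^t$ does not obviously help here, since well-definedness of $F_\db$ is a hypothesis about $F$ itself.
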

\begin{proof}~
\begin{itemize}
\item[\ref{i:cst}$\implies$\ref{i:lip}] It is clear from Proposition~\ref{p:subbesi}.
\item[\ref{i:lip}$\implies$\ref{i:def}] It is clear from the definition of Lipschitz function.
\item[\ref{i:def}$\implies$\ref{i:cst}]
	Assume that $\tau$ is nonuniform, \ie there are two words $u$ and $v$ such that $\len{f^*(u)}\ne\len{f^*(v)}$.
	One can assume that their longest common suffix has length $\delta-1$.
	Indeed, otherwise let $a\in A$, $u'=u_{\co{\len u-\diam+1}{\len u}}a^{\diam-1}$ and $v'=v_{\co{\len u-\diam+1}{\len v-1}}a^{\diam-1-l}$; one can note that $f^*(ua^{\diam-1})=f^*(u)f^*(u')$ and $f^*(va^{\diam-1})=f^*(v)f^*(v')$, so that either $\len{f^*(ua^{\diam-1})}\ne\len{f^*(va^{\diam-1})}$, or $\len{f^*(u')}=\len{f^*(ua^{\diam-1})}-\len{f^*(u)}\ne\len{f^*(va^{\diam-1})}-\len{f^*(v)}=\len{f^*(v')}$, and both these pairs of words share a common suffix of length at least $\diam-1$.
	Assume without loss of generality that $k=\len{f^*(u)}-\len{f^*(v)}>0$.
	\begin{itemize}
		\item First assume that there exist $w\in A^*$ and $i\in \N$ such that $f^*(w)_i\neq f^*(w)_{i+k}$.
		 By our previous assumption, we know that $w'=u_{\co{\len u-\diam}{\len u}}\uinf w_{\co0\diam}=v_{\co{\len v-\diam}{\len v-1}}\uinf w_{\co0\diam}$.
		 $F(u\uinf w)=f^*(u)f^*(w')F(\uinf w)$, so that for every $j\in\N$, $F(u\uinf w)_{\len{f^*(u)}+\len{f^*(w')}+j\len{f^*(\uinf w_{\co0{\len w+\diam}})}+i}=w_i$.
		 On the other hand, $F(v\uinf w)_{\len{f^*(u)}+\len{f^*(w')}+j\len{\uinf w_{\co0{\len w+\diam}}}+i}=F(v\uinf w)_{\len{f^*(v)}+k+\len{f^*(w')}+j\len{\uinf w_{\co0{\len w+\diam}}}+i}=w_{i+k}\ne w_i$.
		 We deduce that $\db(F(u\uinf w),F(v\uinf w))\ge\frac1{\len{\uinf w_{\co0{\len w+\diam}}}}$.
		 Since $\len{u}=\len{v}$, we know $\db(u\uinf w,v\uinf w)=0$, so that $F$ is not well-defined over the quotient space.
		\item Otherwise, for all $w\in A^*$, $i\in \co{0}{\len{w}}$, we have $f^*(w)_i=f^*(w)_{i+k}$.
			Let $w'\in A^*$, such that $\len{w'}\geq k$ and let $w\in A^*$. Then:
		\[\forall j\in\N,f^*(w)_j=f^*(w'w)_{j+\len{f^*(w')}}=f^*(w'w)_{j+\len{f^*(w')}\bmod k}=f^*(w')_{j+\len{f^*(w')}\bmod k}.\]
		Hence for all $x\in A^\N$, $j\in \N$, we have $F(x)_j=f^*(w')_{j+\len{f^*(w')}\bmod k}$.
		So, $F$ is constant.
	\popQED
\end{itemize}
\end{itemize}
\end{proof}
\begin{cor}
A substitution $\tau$ yields a well-defined dynamical system $\overline{\tau}$ over $X_{\db}$ if and only if is $1$-Lipschitz with respect to $\db$.
\end{cor}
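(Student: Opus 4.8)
The plan is to deduce the statement directly from Theorem~\ref{dill Besi} and Proposition~\ref{p:subbesi}, using that a substitution is exactly a dill map of diameter $\diam=1$.

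The implication ``$1$-Lipschitz $\Rightarrow$ well-defined'' is immediate: if $\db(\overline\tau(x),\overline\tau(y))\le\db(x,y)$ for all $x,y\in A^\N$, then $\db(x,y)=0$ forces $\db(\overline\tau(x),\overline\tau(y))=0$, so $\overline\tau$ passes to the quotient $X_\db$. This is nothing but the implication \ref{i:lip}$\Rightarrow$\ref{i:def} of Theorem~\ref{dill Besi}.

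For the converse, I would assume that $\overline\tau$ induces a well-defined map on $X_\db$ and invoke Theorem~\ref{dill Besi} with $\diam=1$: then $\overline\tau$ is constant or $\tau$ is uniform. If $\overline\tau$ is constant, it is trivially $1$-Lipschitz (indeed $0$-Lipschitz). If $\tau$ is uniform, all images $\tau(a)$, $a\in A$, share a common length $\ell=\minf\tau=\maxf\tau$; since $\diam=1$, the quantity $\maxd$ is a maximum of Hamming distances between words of length $\ell$, hence $\maxd\le\ell$, and therefore the Lipschitz constant $\frac{\diam\maxd}{\minf\tau}=\frac{\maxd}{\ell}$ furnished by Proposition~\ref{p:subbesi} is at most $1$. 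Thus $\db(\overline\tau(x),\overline\tau(y))\le\db(x,y)$ for all $x,y$, i.e. $\overline\tau$ is $1$-Lipschitz, which closes the equivalence.

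I expect no real obstacle here; the only point worth isolating is the observation that, for a uniform substitution, the generic Lipschitz bound $\frac{\diam\maxd}{\minf f}$ of Theorem~\ref{dill Besi} automatically specializes to a constant $\le 1$, because $\diam=1$ and $\maxd\le\minf f$. For genuine dill maps ($\diam\ge 2$) this sharpening may fail, which is precisely why the clean ``$1$-Lipschitz'' formulation is specific to substitutions.
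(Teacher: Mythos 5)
Your proposal is correct and is exactly the intended argument: the paper states this corollary without proof as an immediate consequence of Theorem~\ref{dill Besi} specialized to $\diam=1$, and the one observation that needs to be made explicit is the one you isolate, namely that for a uniform substitution $\maxd\le\minf\tau$, so the generic Lipschitz constant $\frac{\diam\maxd}{\minf f}$ collapses to a value $\le 1$ (with the constant case handled trivially). Nothing is missing.
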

\subsection{Equicontinuity}
The following derives directly from Proposition~\ref{p:subbesi} (and completeness of the Besicovitch space).
\begin{cor}\label{equi besico}
	Let $F$ be a uniform dill map with diameter $\diam$ and local rule $f$.
	\begin{enumerate}
		\item If $\diam \maxd\leq \minf f$, then $F_{\db}$ is equicontinuous.\\
			For example, for every substitution $\tau$, $\overline\tau$ is equicontinuous.
		\item If $\diam \maxd< \minf f$, then $F_{\db}$ is contracting: every orbit converges to a unique fixed point.
			For example, for every substitution $\tau$ such that $\maxd[\tau]<\minf{\tau}$, $\overline\tau$ is contracting.
		\item If $\diam\mind=\minf f$, then $F_\db$ is an isometry.\\
		 For example, for every substitution $\tau$ such that $\mind[\tau]=\minf\tau$ (which means that the substitution is everywhere marked: any two images have no letter in common), $\overline\tau$ is an isometry.
	\end{enumerate}
\end{cor}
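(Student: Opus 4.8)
The plan is to read all three items off Proposition~\ref{p:subbesi}, which gives, for any uniform dill map $F$ with diameter $\diam$ and local rule $f$, the two-sided estimate
\[\frac{\diam\mind}{\minf f}\,\db(x,y)\le\db(F(x),F(y))\le\frac{\diam\maxd}{\minf f}\,\db(x,y),\qquad\forall x,y\in A^\N.\]
First, for item~1, the hypothesis $\diam\maxd\le\minf f$ makes the right-hand multiplicative constant at most $1$, so $F$ is $1$-Lipschitz with respect to $\db$; in particular $\db(x,y)=0\implies\db(F(x),F(y))=0$, so $F_\db$ is well-defined on $X_\db$, and a $1$-Lipschitz map is equicontinuous (indeed $F_\db^t$ is $1$-Lipschitz for all $t$, so $\delta=\varepsilon$ works uniformly). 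The application to substitutions is the case $\diam=1$: a substitution always satisfies $\maxd[\tau]\le\minf\tau$ because any two images $\tau(a),\tau(b)$ (read as prefixes of length $\minf\tau$, or compared on their common length) differ in at most $\minf\tau$ positions, hence $1\cdot\maxd[\tau]\le\minf\tau$. One should phrase this carefully: $\maxd$ here is measured via $\dhh$ on words of a common length, and for a uniform rule this length bounds the Hamming distance trivially; in the substitution case $\diam=1$ and $\maxd[\tau]\le\minf\tau$ is immediate.

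For item~2, the strict inequality $\diam\maxd<\minf f$ gives a Lipschitz constant $c:=\diam\maxd/\minf f<1$, so $F_\db$ is a contraction on $(X_\db,\db)$. Since the Besicovitch space is complete (as recalled after the definition of $X_\db$, following \cite{blanchard_cellular_1997}), the Banach fixed point theorem yields a unique fixed point $z\in X_\db$, and for every $x$ one has $\db(F_\db^t(x),z)\le c^t\db(x,z)\to0$, i.e.\ every orbit converges to $z$. The substitution instance is again $\diam=1$, giving the stated conclusion for $\tau$ with $\maxd[\tau]<\minf\tau$. For item~3, the hypothesis $\diam\mind=\minf f$ combined with the general bound $\diam\mind\le\diam\maxd\le\minf f$ (the last inequality because well-definedness is implicit, or rather: $\mind\le\maxd$ always, and one checks $\diam\maxd\le\minf f$ is forced — alternatively one simply notes the lower bound in Proposition~\ref{p:subbesi} is then $\db(x,y)$ itself while the upper bound is also $\db(x,y)$ since $\maxd=\mind$ when... ) — more cleanly: when $\diam\mind=\minf f$ we necessarily have $\mind=\maxd$ is not claimed; instead, the lower bound already reads $\db(F(x),F(y))\ge\db(x,y)$, and for the reverse one uses that $F$ must be uniform hence well-defined, and... here I would just invoke that $\diam\mind=\minf f$ forces $\diam\maxd=\minf f$ as well via the observation that $\maxd$ cannot exceed $\minf f/\diam$ without contradicting well-definedness through Theorem~\ref{dill Besi}; then both bounds collapse to $\db(F(x),F(y))=\db(x,y)$, i.e.\ $F_\db$ is an isometry.

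The main obstacle is the bookkeeping around item~3: I need the clean implication "$\diam\mind=\minf f\implies\diam\maxd=\minf f$", which is what actually makes $F$ an isometry rather than merely expanding. I expect this follows because $\maxd\le\minf f/\diam$ is equivalent (for uniform $F$) to the Lipschitz-with-constant-$\le1$ property, and in fact one can show directly from the word combinatorics that if two length-$\minf f$ images of the local rule agree wherever two inputs differ in exactly one coordinate... Actually the safest route, which I would take, is: for a uniform dill map, $\maxd\le\minf f$ trivially when $\diam=1$, but for general $\diam$ I would instead argue that $\diam\mind=\minf f$ together with $\mind\le\maxd$ gives $\diam\maxd\ge\minf f$; and $\diam\maxd\le\minf f$ must also hold, since otherwise by Proposition~\ref{p:subbesi} the upper constant exceeds $1$, yet taking $x,y$ with $\db(x,y)$ small and positive and iterating would still be fine — so the real content is just that the two constants in Proposition~\ref{p:subbesi} are squeezed to $1$, which holds precisely when $\diam\mind=\diam\maxd=\minf f$; hence I state item~3 under the understanding that $\diam\mind=\minf f$ already entails $\db(F(x),F(y))\ge\db(x,y)$ from the lower bound and $\le\db(x,y)$ from the upper bound once one notes $\maxd=\mind$ is the only consistent possibility, and for substitutions "everywhere marked" ($\mind[\tau]=\minf\tau$) indeed forces $\maxd[\tau]=\minf\tau$ since $\maxd[\tau]\le\minf\tau$ always. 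For the parenthetical "everywhere marked" claim I would just remark that $\mind[\tau]=\minf\tau$ says any two distinct images differ in every one of the first $\minf\tau$ positions, i.e.\ share no letter at any common position.
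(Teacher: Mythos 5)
Your derivations of items 1 and 2 are correct and are exactly what the paper intends by ``derives directly from Proposition~\ref{p:subbesi} and completeness'': the upper bound of Proposition~\ref{p:subbesi} makes $F$ $1$-Lipschitz for $\db$ (hence well-defined and equicontinuous, all iterates being $1$-Lipschitz) when $\diam\maxd\le\minf f$, and a strict contraction when the inequality is strict, at which point completeness of $X_\db$ and the Banach fixed-point theorem give convergence of every orbit to the unique fixed point. The substitution examples are the case $\diam=1$, where $\maxd[\tau]\le\minf\tau$ holds trivially since the images all have length $\minf\tau$.

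Item 3 is where the genuine gap lies, and you visibly sensed it. An isometry needs both $\db(F(x),F(y))\ge\db(x,y)$, which the lower bound of Proposition~\ref{p:subbesi} gives you from $\diam\mind=\minf f$, and $\db(F(x),F(y))\le\db(x,y)$, which requires $\diam\maxd\le\minf f$. Your proposed justification of the latter --- that $\diam\maxd>\minf f$ would ``contradict well-definedness through Theorem~\ref{dill Besi}'' --- is false: that theorem says a uniform dill map is well-defined no matter how large $\maxd$ is; it only fixes the Lipschitz constant at $\diam\maxd/\minf f$, which may well exceed $1$. And the fallback implication $\diam\mind=\minf f\implies\maxd=\mind$ fails once $\diam\ge2$: take $\diam=2$, $A=\{0,1\}$ and any bijective local rule $f:A^2\to A^2$ with $f(00)=00$ and $f(11)=01$; then all images are distinct so $\mind=1$ and $\diam\mind=2=\minf f$, yet $F(\uinf0)=\uinf0$ and $F(\uinf1)=\uinf{(01)}$, so $\db(F(\uinf0),F(\uinf1))=\tfrac12<1=\db(\uinf0,\uinf1)$ and $F_\db$ is not an isometry (this also shows the lower bound of Proposition~\ref{p:subbesi} cannot be taken at face value for $\diam\ge2$, which is consistent with the double-counting in its proof). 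The only part of item 3 that closes is the substitution case $\diam=1$, where $\maxd[\tau]\le\minf\tau$ is automatic, so $\mind[\tau]=\minf\tau$ forces both bounds of Proposition~\ref{p:subbesi} to collapse to equality --- and that is the one place where your argument is clean. For general diameter the needed step is missing, and no rearrangement of the constants will supply it.
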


\begin{ex}\label{x:toeplitz}
	Let $\tau$ be a T\oe plitz substitution.
	By definition and by Corollary~\ref{equi besico}, $\overline\tau$ is contracting, so that all orbits converge towards a unique fixed point: the class for $\sim_\db$ of the usual fixed points of the substitution (which is unique if $\forall a,b\in A,\tau(a)_0=\tau(b)_0$, but may not be otherwise, like for the Cantor substitution).
\end{ex}

\begin{ex}\label{x:thuemorse}
	On the contrary, the Thue-Morse substitution is an isometry, thanks to Corollary~\ref{equi besico}.
	In particular, if $\Sigma_\tau$ is the orbit closure of the two fixed points, then for every $x\notin\Sigma_\tau$, the pseudo-metric $\db(\tau^t(x),\Sigma_\tau)$ is constantly positive, so that our intuition that orbits converge towards $\Sigma_\tau$, though justified in the Cantor space, is completely false in the Besicovitch space.
\end{ex}

\begin{rem}
	The behaviors from Example~\ref{x:toeplitz} and~\ref{x:thuemorse} give an essentially full picture of what can occur.
	Indeed, if there exists $p\in\N$ such that $\maxd[\tau^p]<\minf\tau^p$, then $\tau^p$ is contracting; consequently, for every $t\in\N$ the diameter of $\tau^t(A^\N)$ is bounded by that of $\tau^{p\ceil{\frac tp}}(A^\N)$, which is bounded by $\left(\frac{\maxd[\tau^p]}{\minf\tau^p}\right)^{\ceil{\frac tp}}$; so all orbits of $\tau$ converge towards a unique fixed point.\\
	If, on the other hand, for every $t\in\N$, $\maxd[\tau^t]=\minf\tau$, this means that there exists a subalphabet $A_t\in A$ containing at least two letters, such that $a,b\in A_t\implies\dhh(\tau^t(a),\tau^t(b))=\minf\tau^t$.
	It is not difficult to see that $A_t\subset A_{t+1}$, and since it is finite, the subalphabet $A'=\bigcap_{t\in\N}A_t$ contains at least two letters.	
	Then the restriction of $\overline\tau$ to ${A'}^\N$ is an isometry (because Proposition~\ref{p:subbesi} remains true when the minimum and maximum are taken over a subalphabet).
\end{rem}

The links between dynamical properties in the Cantor space and in the Besicovitch space appeared for cellular automata in \cite{blanchard_cellular_1997}, \cite{formenti_dynamics_2009}: in particular, sensitivity in the Besicovitch space implies sensitivity in the Cantor space, or equicontinuity in the Cantor space implies equicontinuity in the Besicovitch space.
Nevertheless, unlike for cellular automata, there exist dill maps which are equicontinuous in the Cantor space but not in the Besicovitch space. 
\begin{ex}
Consider the dill map $F$ with diameter $2$ defined as the composition of the doubling substitution and of the Min CA, or equivalently by the following local rule:
	\begin{eqnarray*}
		f : 00,01,10 &\mapsto & 00 \\
		11 & \mapsto & 11.
	\end{eqnarray*}
	This dill map is $1$-Lipschitz in the Cantor space, but $F_{\db}$ is not equicontinuous.
	We can even prove that $\uinf1$ is $1^-$-unstable.
	For $p\in\Ns$, let $y=\uinf{(1^{p-1}0)}$.
	Remark that $\db(x,y)=\frac{1}{p}$. 
	On the other hand, $F^{p-1}(y)=\uinf0$, so that $\db(F^{p-1}(\uinf1),F^{p-1}(y))=\db(\uinf1,\uinf0)=1$. 
\end{ex}
Some weak robustness properties of cellular automata from \cite{formenti_dynamics_2009}, though, can be generalized to dill maps, like in the following statement.
\begin{pro}
	Let $F$ be a uniform dill map and $m\in \N$. Then we have the following:
	\begin{enumerate}
		\item If $F_{\db}$ is sensitive, then $\sigma_{\db}^m\circ F_{\db}$ is sensitive, too.
		\item If $x$ is an equicontinuous point for $F_{\db}$, then it is an equicontinuous point for $\sigma_{\db}^m\circ F_{\db}$, too.
		\item If $F_{\db}$ is equicontinnuous, then $\sigma_{\db}^m\circ F_{\db}$ is equicontinuous, too.
	\end{enumerate}
\end{pro}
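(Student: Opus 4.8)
The plan is to reduce all three items to a single observation: over the Besicovitch space the shift $\sigma_\db$ is an isometry (and $F_\db$ is itself well-defined by Theorem~\ref{dill Besi}, since $F$ is uniform), so that composing with powers of the shift affects no orbit-distance, and for a \emph{uniform} dill map iterating $\sigma^m\circ F$ merely accumulates shifts.

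First I would record that $\sigma_\db$ is a well-defined isometry of $X_\db$. For $x,y\in A^\N$ and $l\in\Ns$, one has $\dhh(\sigma(x)_{\co0l},\sigma(y)_{\co0l})=\dhh(x_{\co1{l+1}},y_{\co1{l+1}})$, which differs from $\dhh(x_{\co0l},y_{\co0l})$ by at most $1$ (the coordinate $0$ is dropped and the coordinate $l$ is added); dividing by $l$ and letting $l\to\infty$ gives $\db(\sigma(x),\sigma(y))=\db(x,y)$, hence by iteration $\db(\sigma^k(x),\sigma^k(y))=\db(x,y)$ for every $k\in\N$. In particular $\sigma_\db^m$ is well-defined and is an isometry for every $m\in\N$.

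Next I would unfold the iterates of $\sigma_\db^m\circ F_\db$. Since $F$ is uniform, Corollary~\ref{shift inv} gives $F\circ\sigma^n=\sigma^{n\minf f}\circ F$, \ie $\sigma$ and $F$ commute up to a dilation of the exponent by $\minf f$. A straightforward induction then yields, for every $t\in\N$,
\[(\sigma^m\circ F)^t=\sigma^{m\,N_t}\circ F^t,\qquad\text{where }N_t=\sum_{i=0}^{t-1}\minf f^{\,i},\]
and this descends to the quotient: $(\sigma_\db^m\circ F_\db)^t=\sigma_\db^{m\,N_t}\circ F_\db^t$.

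Finally, combining the two steps, for all $x,y\in A^\N$ and all $t\in\N$,
\[\db\bigl((\sigma_\db^m\circ F_\db)^t(x_\db),(\sigma_\db^m\circ F_\db)^t(y_\db)\bigr)=\db\bigl(\sigma_\db^{mN_t}F_\db^t(x_\db),\sigma_\db^{mN_t}F_\db^t(y_\db)\bigr)=\db\bigl(F_\db^t(x_\db),F_\db^t(y_\db)\bigr),\]
because $\sigma_\db^{mN_t}$ is an isometry. Hence $\sigma_\db^m\circ F_\db$ and $F_\db$ have literally the same orbit-distance functions $(t,x,y)\mapsto\db(F^t(x),F^t(y))$; since sensitivity, being an equicontinuity point, and equicontinuity are each phrased purely through these functions (quantified over $\varepsilon,\delta,x,y,t$), each of the three conclusions follows at once. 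The only point requiring care — rather than a genuine obstacle — is the bookkeeping above: verifying the isometry property of $\sigma_\db$ and the exact power formula for $(\sigma^m\circ F)^t$; for cellular automata $\minf f=1$, so $N_t=t$ and one recovers the corresponding statement from \cite{formenti_dynamics_2009}.
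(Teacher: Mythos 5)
Your proposal is correct and follows essentially the same route as the paper: establish the conjugacy $(\sigma^m\circ F)^t=\sigma^{m\sum_{i=0}^{t-1}\minf f^{\,i}}\circ F^t$ by induction using uniformity, then conclude via shift-invariance of $\db$ that the orbit-distance functions of $\sigma^m_\db\circ F_\db$ and $F_\db$ coincide. The only difference is cosmetic: you spell out explicitly that $\sigma_\db$ is an isometry, which the paper simply invokes as ``$\db$ is shift-invariant.''
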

\begin{proof}
	The key to proof the three statements is the following:
	\[\forall n\in \N,(\sigma^m\circ  F)^n=\sigma^{\sum_{k=0}^{n-1}m\minf{f}^k}\circ F^n.\]
	Let us prove this by induction on $n\in\N$.
	The case $n=0$ is obvious.
	Suppose that for it is true for some $n$.
	\begin{eqnarray*}
		(\sigma^m \circ  F)^k &=&(\sigma^{m}\circ F)\circ(\sigma^m\circ  F)^n \\
		&=& \sigma^{m}\circ F \circ (\sigma^{\sum_{k=0}^{n-1}m\minf{f}^k}\circ F^n) \\
		&=& \sigma^{m}\circ \sigma^{\minf{f}\sum_{k=0}^{n-1}m\minf{f}^k}\circ F \circ F^n,
		%&=& \sigma^{\sum_{k=0}^{n}m\minf{f}^k}\circ F^{n+1}.
	\end{eqnarray*}
	which is the next step of the induction hypothesis.
	The last equality comes from the fact that $F$ is uniform, so that $\cocy[n]{x}=\minf{f}=\maxf{f}$.
	%Hence according to Corollary \ref{shift inv}, we have:
	%$$F\circ\sigma^m(x)=\sigma^{m\minf{f}} \circ F(x)=\sigma^{m\maxf{f}} \circ F(x), \forall x\in A^\N.$$
	%By induction on $n$, we aim to prove our result.
	%The case where $k=1$ is evident. 
	%Let suppose now that for $k=n$ we have: 
	%$$(\sigma^m\circ  F)^n=\sigma^{\sum_{k=0}^{n-1}m\minf{f}^k}\circ F^n, \forall n\in \N.$$
	%Hence for $k=n+1$ we have:  
	%In conclusion, for all $m\in \N$ we have:
	%$$(\sigma^m\circ  F)^n=\sigma^{\sum_{k=0}^{n-1}m\minf{f}^k}\circ F^n, \forall n\in \N.$$
	Now we can deduce the proof of the statement: for all $x,y\in A^\N$ and for all $ n,m\in \N$: 
	\begin{eqnarray*}
		\db((\sigma^m\circ F)^n(x),(\sigma^m\circ F)^n(y)) &=&\db(\sigma^{\sum_{k=0}^{n-1}m\minf{f}^k}( F^n(x)),\sigma^{\sum_{k=0}^{n-1}m\minf{f}^k}( F^n(y))) \\&=& \db( F^n(x),F^n(y)) 
		\quad\text{(since $\db$ is shift-invariant)}
		\popQED
	\end{eqnarray*}
\end{proof}

It was known that the cellular automata suit well in the Besicovitch pseudo-metric, and we have seen in this section that it is also the case of uniform substitutions.
But we proved that this is not true for non-uniform substitutions.
In the next section, we consider another topological space, in which both cellular automata and substitutions are well-defined over this space.
\section{Dill maps in the Feldman space}\label{s:feld}
\subsection{Shift}
One of the motivation to study the Besicovitch space is that the shift is an isometry over this space.
In the Feldman space, this is still true, but even more than this: the shift is exactly the indentity.
\begin{pro}\label{p:cashift}
The shift over the Feldman space is the identity map.
\end{pro}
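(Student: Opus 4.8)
The plan is to prove the stronger pointwise statement $\dl(x,\sigma(x))=0$ for every $x\in A^\N$. Since the Feldman space $X_{\dl}$ is by construction the quotient of $A^\N$ by the relation ``$\dl=0$'', this means that $x$ and $\sigma(x)$ always represent the same point, i.e. the induced map $\sigma_{\dl}$ is the identity on $X_{\dl}$.

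First I would fix $x\in A^\N$ and $l\in\Ns$, and compare the two length‑$l$ prefixes $u=x_{\co0l}=x_0x_1\cdots x_{l-1}$ and $v=\sigma(x)_{\co0l}=x_{\co1{l+1}}=x_1x_2\cdots x_l$. The key observation is that one deletion on each side already makes the two words coincide: $D_0(u)=x_1\cdots x_{l-1}=D_{l-1}(v)$, where the last letter of the length‑$l$ word $v$ sits at position $l-1$. Plugging this single edition sequence into the definition of the Levenshtein distance yields $\dll(x_{\co0l},\sigma(x)_{\co0l})\le\frac{1+1}{2}=1$. (Equivalently, one may invoke the alternative formula $\dll(u,v)=\frac{\len u+\len v}2-l'$, noting that $x_1\cdots x_{l-1}$ is a common subword of $u$ and $v$, so $l'\ge l-1$ and $\dll(u,v)\le l-(l-1)=1$.)

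It then remains to divide by $l$ and pass to the limit: $\frac{\dll(x_{\co0l},\sigma(x)_{\co0l})}{l}\le\frac1l$, which tends to $0$ as $l\to\infty$, hence $\dl(x,\sigma(x))=\limsup_{l\to\infty}\frac{\dll(x_{\co0l},\sigma(x)_{\co0l})}{l}=0$. Thus $x\sim_{\dl}\sigma(x)$ for all $x\in A^\N$, so $\sigma_{\dl}=\id$ on $X_{\dl}$, which is the claim. There is essentially no obstacle in this argument; the only points requiring a bit of care are the correct index of the last letter to delete from the shifted prefix ($l-1$, not $l$) and the cost normalization in the definition of $\dll$, under which one deletion on each of two equal‑length words costs exactly $1$. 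A cosmetic variant, using the insertion/deletion formulation mentioned right after the definition of $\dll$, would delete only $x_0$ from $u$ and only the letter $x_l$ from $v$, giving the same bound.
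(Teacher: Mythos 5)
Your proof is correct and is essentially identical to the paper's: both delete the first letter of $x_{\co0l}$ and the last letter of $\sigma(x)_{\co0l}$ to get the common word $x_{\co1l}$, bound $\dll$ by $1$, and conclude by dividing by $l$ and taking the limsup. No gaps.
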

\begin{proof}
Let $\tilde{x}\in X_{\dl}$. If $x\in \tilde{x}$, then
$\dll(x_{\co{0}{l}},\sigma(x)_{\co{0}{l}})=\dll(x_{\co{0}{l}},x_{\co{1}{1+l}})\le1$:
simply delete the first letter of $x_{\co{0}{l}}$ and the last letter of $\sigma(x)_{\co{0}{l}}$, to obtain $x_{\co{1}{l}}$ in both cases.
Hence : 
\[\dl(x,\sigma(x))=\limsup_{l\to\infty} \dfrac{\dll(x_{\co{0}{l}},x_{\co{1}{1+l}})}{l} \leq \limsup_{l\to\infty} \dfrac{1}{l}=0.\popQED\]
%So $x\in \widetilde{\sigma(x)}=\tilde{\sigma}(\tilde{x})$. And then, $\tilde{x}\subseteq \tilde{\sigma}(\tilde{x})$.\\
%If $z\in \tilde{\sigma}(\tilde{x})$, then $\dl(z,x)\leq \dl(z,\sigma(x))+\dl(\sigma(x),x)=0$. 
%Hence $\dl(z,x)=0$. So $z\in \tilde{x}$, therefore $ \tilde{\sigma}(\tilde{x})\subseteq\tilde{x}$.
%In conclusion, for all $x\in A^\N$ we have, $ \tilde{\sigma}(\tilde{x})=\tilde{x}$.
\end{proof}
Since every equivalence class is invariant by shift, dynamical systems over this space can be considered as acting on shift orbits.
\begin{lem}\label{l:plage}
For every $x,y\in A^\N$, if $\dl(x,y)=0$ then for all $p\in \N,$ there exists $i,j\in \N$ such that : $$x_{\co{i}{i+p}}=y_{\co{j}{j+p}}.$$
\end{lem}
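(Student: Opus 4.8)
The plan is to use the characterization of $\dll$ as $\frac{\len u + \len v}{2} - l$ where $l$ is the length of the longest common subword (subsequence) of $u$ and $v$. Applying this to prefixes, for each $l$ we write $\dll(x_{\co0l}, y_{\co0l}) = l - k_l$, where $k_l$ is the length of the longest common subsequence of $x_{\co0l}$ and $y_{\co0l}$. The hypothesis $\dl(x,y)=0$ says $\limsup_{l\to\infty} \frac{l - k_l}{l} = 0$, i.e. $\frac{k_l}{l} \to 1$; in particular $k_l \to \infty$.

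First I would fix $p \in \N$ and argue by contradiction: suppose that no length-$p$ factor of $x$ ever coincides with a length-$p$ factor of $y$. The idea is that a common subsequence of $x_{\co0l}$ and $y_{\co0l}$ of length $k$ can be broken into blocks, and within each "run" where the subsequence is realized by consecutive positions in $x$, it must also be realized somewhere — but not by consecutive positions in $y$, since otherwise we would get a common factor of length $p$ (once a run has length $\ge p$). So under the contradiction hypothesis, any common subsequence of two prefixes, viewed as an increasing matching between positions of $x$ and positions of $y$, has the property that it never contains $p$ consecutive indices on *both* sides simultaneously. Each maximal "run of consecutive positions on the $x$-side" has length $< p$, except that whenever such a run has length $\ge p$ it forces a gap on the $y$-side; quantifying this, among any $p$ consecutive matched pairs there is at least one "jump" on the $y$-side (a gap $\ge 2$). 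Counting jumps: if the common subsequence has length $k$ and matches into $y_{\co0l}$, the number of such forced jumps is at least $\lfloor k/p \rfloor$ roughly, and each jump consumes an extra unmatched position of $y$, so $l \ge k + \lfloor k/p\rfloor - O(p)$, i.e. $\frac{k_l}{l} \le \frac{p}{p+1} + o(1) < 1$, contradicting $\frac{k_l}{l}\to 1$.

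The main obstacle is making the block-counting argument precise: one must carefully set up the increasing injection $\phi: \{\text{matched } x\text{-indices}\} \to \{\text{matched } y\text{-indices}\}$ realizing the common subsequence, define "runs" as maximal intervals of consecutive integers in the domain on which $\phi$ is also "consecutive" (i.e. $\phi(t+1) = \phi(t)+1$), show each such run has length $< p$ under the contradiction hypothesis (else $x_{[\,\cdot\,]} = y_{[\,\cdot\,]}$ gives a common factor of length $p$), and then convert the bound "every run has length $< p$" into a linear lower bound on $l = \len{y_{\co0l}}$ in terms of $k_l$: between the end of one run and the start of the next, either $\phi$ jumps by at least $2$ (wasting a $y$-position) or the $x$-side jumps (but both sides can't stall forever), so each group of $< p$ matched pairs is separated by at least one extra wasted $y$-position, giving $l \ge k_l + \frac{k_l}{p} - 1$. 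Combined with $k_l/l \to 1$ this is the contradiction, and for $l$ large enough we conclude the existence of $i,j$ with $x_{\co{i}{i+p}} = y_{\co{j}{j+p}}$. Since $p$ was arbitrary, the lemma follows.
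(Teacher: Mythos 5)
Your plan follows essentially the same route as the paper: the hypothesis forces the longest common subsequence of the prefixes to have density tending to $1$, and a common subsequence that dense must contain a long ``doubly contiguous'' run, i.e.\ a common factor. (The paper states this combinatorial implication in one line, taking $\varepsilon=\frac1{p+1}$ and asserting the existence of a common factor of length $p$; your write-up is the missing justification of that step, organized contrapositively.) One step of your accounting needs repair, though: it is not true that among any $p$ consecutive matched pairs there must be a jump on the $y$-side, nor that each break between maximal doubly contiguous runs wastes a $y$-position. A break can consist of a gap purely on the $x$-side, with $\phi(t+1)=\phi(t)+1$; all that the contradiction hypothesis gives you is a jump on the $x$-side \emph{or} on the $y$-side. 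So the inequality $l\ge k_l+k_l/p-1$ does not follow as stated. The fix is to charge each break to an unmatched position of $x$ or of $y$: there are at most $(l-k_l)$ of each, hence at most $2(l-k_l)$ breaks in total, while the number of breaks is at least $\lceil k_l/p\rceil-1$. This yields $\frac{l-k_l}{l}\ge\frac{k_l}{2pl}-\frac1{2l}\to\frac1{2p}>0$, still contradicting $\frac{l-k_l}{l}\to0$; only the constant changes, and since $p$ is arbitrary the lemma is unaffected. (Incidentally, the same factor of $2$ means the paper's own threshold $\varepsilon=\frac1{p+1}$ is too generous --- something like $\frac1{2p+1}$ is what the counting actually supports --- so your more explicit version, once corrected, is the safer argument.)
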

\begin{proof}
Let $x,y\in A^\N$ such that $\dl(x,y)=0$ and $p\in \N$. Then : $$\forall \varepsilon>0,\exists N>0,\forall n>N, \dll(x_{\co{0}{n}},y_{\co{0}{n}})<n\times \varepsilon.$$
Hence, for $\varepsilon=\dfrac{1}{p+1}$, there exists $N_1>0$ such that for all $n>N_1$ we have $\dll(x_{\co{0}{n}},y_{\co{0}{n}})<\dfrac{n}{p+1}$.
Then for $\varepsilon=\dfrac{1}{p+1}$, there exists $N=\max\{p+1,N_1\}$ such that for all $n>N$ we have $\dll(x_{\co{0}{n}},y_{\co{0}{n}})<\dfrac{n}{p+1}$.
Hence, there exists $u\in A^{p}$ such that $u\sqsubseteq x_{\co{0}{n}}$ and $u\sqsubseteq y_{\co{0}{n}}$.
So, there exists $i,j\in\co{0}{n-p}$ such that : $$u=x_{\co{i}{i+p}}=y_{\co{j}{j+p}}.\popQED
$$
\end{proof}
\begin{lem}\label{l:cashift}
Let $F$ be a CA with diameter $\diam$ and local rule $f$. For every $u\in A^\diam$, if $\dl(\uinf u,F(\uinf u))=0$ then there exists $k\in\co0\diam$ such that $F(\uinf u)=\sigma^k(\uinf u)$.
\end{lem}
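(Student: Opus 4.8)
The plan is to exploit the fact that a CA maps the periodic point $\uinf u$ (with $u\in A^\diam$) to another point which is periodic, with period dividing $\diam$. Indeed, $F(\uinf u)$ is obtained by concatenating the single block $f(u_0u_1\cdots u_{\diam-1})$ — wait, more carefully: $F(\uinf u)_i = f((\uinf u)_{\co ii+\diam})$, and since $\uinf u$ has period $\diam$, the window $(\uinf u)_{\co ii+\diam}$ depends only on $i \bmod \diam$. Hence $F(\uinf u)$ is $\diam$-periodic: $F(\uinf u) = \uinf w$ where $w = w_{\co0\diam}$ with $w_i = f((\uinf u)_{\co ii+\diam})$. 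So the hypothesis $\dl(\uinf u, \uinf w)=0$ is a statement about two $\diam$-periodic sequences, and I want to conclude that $w$ is a cyclic shift of $u$, i.e. $w = \sigma^k(\uinf u)_{\co0\diam}$ for some $k\in\co0\diam$.

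First I would apply Lemma~\ref{l:plage}: since $\dl(\uinf u,\uinf w)=0$, for $p=\diam^2$ (or any large enough multiple of $\diam$ — taking $p$ a common multiple of the periods times something big enough to force alignment) there exist $i,j\in\N$ with $(\uinf u)_{\co ii+p} = (\uinf w)_{\co jj+p}$. Taking $p \ge 2\diam$, this common block of length $\ge 2\diam$ contains a full period of $\uinf u$ starting at some phase and a full period of $\uinf w$ starting at some phase; since a biinfinite-looking block of length $\ge 2\diam$ that is simultaneously $\diam$-periodic in two ways forces the two periodic sequences $\uinf u$ and $\uinf w$ to coincide up to a shift, I get that $\uinf w = \sigma^k(\uinf u)$ for $k = (i-j)\bmod\diam \in\co0\diam$ (the cyclic-shift relation between two periodic words sharing a long enough factor is the standard Fine–Wilf–type fact, here trivial since both periods equal $\diam$). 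Combining with $F(\uinf u)=\uinf w$ gives $F(\uinf u)=\sigma^k(\uinf u)$ with $k\in\co0\diam$, as desired.

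The main point to get right is the bookkeeping of \emph{why} a shared factor of length $\ge 2\diam$ between two $\diam$-periodic sequences forces one to be a shift of the other: if $(\uinf u)_{\co ii+2\diam} = (\uinf w)_{\co jj+2\diam}$, then for every $t\in\co0\diam$ we have $u_{(i+t)\bmod\diam} = w_{(j+t)\bmod\diam}$, i.e. the cyclic word $u$ read from phase $i$ equals the cyclic word $w$ read from phase $j$; writing $k=(i-j)\bmod\diam$ this says exactly $w_m = u_{(m+k)\bmod\diam}$ for all $m$, hence $\uinf w = \sigma^k(\uinf u)$. I expect this indexing argument — tracking the phases $i,j$ produced by Lemma~\ref{l:plage} modulo $\diam$ — to be the only place requiring care; the rest is immediate from the definition of a cellular automaton and periodicity. (One should also note $k$ can be taken in $\co0\diam$ since shifting by $\diam$ fixes $\uinf u$.)
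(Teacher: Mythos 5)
Your proposal is correct and follows essentially the same route as the paper: observe that $F(\uinf u)$ is $\diam$-periodic, invoke Lemma~\ref{l:plage} to produce a common factor of the two $\diam$-periodic sequences, and conclude by the phase/index computation that one is a cyclic shift of the other. The only cosmetic difference is that you request a common factor of length $2\diam$ where a factor of length $\diam$ already suffices (both periods being $\diam$), which is harmless.
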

\begin{proof}
Let $u\in A^\diam$ be such that $\dl(\uinf u,F(\uinf u))=0$. %, and $p=2\diam-1$.
According to Lemma~\ref{l:plage}, there exist $i,j\in \N$ such that $F(\uinf u)_{\co{i}{i+\diam}}=(\uinf u)_{\co{j}{j+\diam}}$.
Since both $\uinf u$ and $F(\uinf u)$ are $\diam$-periodic for the shift, one can write, for every $k\in\N$,
\[F(\uinf u)_k=F(\uinf u)_{k-i\bmod\diam+i}=(\uinf u)_{k-i\bmod\diam+j}=(\uinf u)_{j-i+k\bmod\diam}.\]
Hence $F(\uinf u)=\sigma^{j-i+k\bmod\diam}(\uinf u)$.
%The interval is just long enough to allow us to write, for every $k\in\N$, $F(\uinf u)_k=F(\sigma^k(\uinf u))_0$.
%
%In such a long interval, there exists a multiple $\diam q\in\co i{i+\diam}$ of $\diam$ such that:
%\begin{align*}
%	F(\uinf u)_{\co{\diam q}{\diam q+\diam-1}}&=(\uinf u)_{\co{j-i+\diam q}{j-i+\diam q+\diam-1}}
%	\\&=(\uinf u)_{\co{j-i\bmod\diam}{j-i\bmod\diam+\diam-1}}.
%\end{align*}
%Hence, \[f^*(uu_{\co0{\len u-1}})=f((\uinf u)_{\co{\diam q}{\diam(q+1)}})=(\uinf u)_{\co{j-i\bmod\diam }{j-i\bmod\diam+\diam -1}=\sigma^{j-i\bmod\diam}(\uinf u)_{j-i\bmod\diam}.\]
%So we find:
%$$(\uinf u)_{\co{i}{i+p}}=F(\uinf u)_{\co{j}{j+p}}=f((\uinf u)_{\co{j}{j+\diam}})f((\uinf u)_{\co{j+1}{j+\diam+1}})\cdots f((\uinf u)_{\co{j+\diam-1}{j+p}}).$$
%Since $u\in A^{\diam}$ then there exists $m\in\co{0}{r}$ such that : $ u=(\uinf u)_{\co{j+m}{j+r+m}}$.
%Then: $$ f(u)=f^*((\uinf u)_{\co{j+m}{j+m+r}})=(\uinf u)_{i+m}.$$
%Hence, for $k=i+m$ we find that $f(u)=\sigma^k(\uinf u)_0$.
\end{proof}
\begin{teo}
For every CA $F$ we have,  $F\in \widetilde{\id} \iff \exists k\in \N,  F=\sigma^k.$
\end{teo}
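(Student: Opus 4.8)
The statement is an equivalence: a cellular automaton $F$ belongs to the class $\widetilde{\id}$ of maps that induce the identity on the Feldman space if and only if $F = \sigma^k$ for some $k \in \N$. The direction $(\Leftarrow)$ is immediate from Proposition~\ref{p:cashift}: each power $\sigma^k$ induces the identity (iterate the argument that $\dl(x,\sigma(x))=0$, or note directly that $\dll(x_{\co0l},\sigma^k(x)_{\co0l})\le k$). So the work is entirely in $(\Rightarrow)$: assume $F_{\dl} = \id_{X_\dl}$, i.e. $\dl(x,F(x))=0$ for every $x\in A^\N$, and deduce that $F$ is a power of the shift.

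First I would exploit periodic configurations. For any word $u\in A^\diam$, the configuration $\uinf u$ satisfies $\dl(\uinf u, F(\uinf u))=0$, so Lemma~\ref{l:cashift} gives some $k_u\in\co0\diam$ with $F(\uinf u)=\sigma^{k_u}(\uinf u)$. The crux is to show that $k_u$ can be chosen to not depend on $u$ — i.e. that a single $k$ works simultaneously for all periodic points — and then to upgrade "$F=\sigma^k$ on all periodic configurations" to "$F=\sigma^k$ everywhere" using density of periodic points in the Cantor space together with continuity of $F$ and of $\sigma^k$ in the Cantor space (Hedlund: $F$ is Cantor-continuous, being a CA).

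For the uniformity of $k$ over $u$, the natural approach is to concatenate: given $u,v\in A^\diam$, look at a configuration built from blocks of $u$'s and $v$'s, or more simply at $\uinf{(u^n v^n)}$ for large $n$, which is periodic of period $2n\diam$. Applying Lemma~\ref{l:cashift} to this configuration forces $F$ to act on it as a single shift $\sigma^{k'}$; but on the long $u$-stretch $F$ is computed by the local rule exactly as it is on $\uinf u$ (the local rule only sees windows of width $\diam$, so deep inside a block of $u$'s the output agrees with $F(\uinf u)=\sigma^{k_u}(\uinf u)$), and similarly on the $v$-stretch it agrees with $\sigma^{k_v}(\uinf v)$; matching these against $\sigma^{k'}$ on both stretches forces $k_u \equiv k' \equiv k_v$ modulo the respective periods, and by taking $n$ large and comparing against windows of width $\diam$ one gets $k_u = k_v =: k$ as genuine integers in $\co0\diam$. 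Then for a general $x\in A^\N$, writing $x$ as a Cantor-limit of periodic points $x^{(n)}$ with $F(x^{(n)}) = \sigma^k(x^{(n)})$, continuity of both sides in the Cantor topology yields $F(x)=\sigma^k(x)$.

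The main obstacle I anticipate is precisely this globalization step: Lemma~\ref{l:cashift} only controls $F$ on a single periodic orbit, and $\dl=0$ is a very weak (asymptotic, position-blind) hypothesis, so one cannot directly compare the shift amounts $k_u$ for different $u$ — one must manufacture a single periodic configuration witnessing both behaviors at once and then carefully disentangle the modular arithmetic (the period of $\uinf{(u^nv^n)}$ is $2n\diam$, not $\diam$, so "$F$ acts as $\sigma^{k'}$" a priori only pins down $k'$ mod $2n\diam$; one must use that the two local pieces each force $k'$ into a residue class mod $\diam$ and that $k'<\diam$ is not automatic, so some care with choosing $n$ and re-reading width-$\diam$ windows is needed). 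Once a uniform $k$ is extracted, the passage to all configurations is routine by density and continuity in the Cantor space.
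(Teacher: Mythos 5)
Your skeleton matches the paper's: the $(\Leftarrow)$ direction via Proposition~\ref{p:cashift}, and for $(\Rightarrow)$ the application of Lemma~\ref{l:cashift} to each $\uinf u$, $u\in A^\diam$, yielding $f(u)=u_{k_u}$ for some $k_u\in\co0\diam$. Where you genuinely diverge is at the step you correctly single out as the crux, namely making the exponent uniform in $u$. The paper dispatches this in one line by taking $k$ to be ``a common multiple of all the $k_u$'' and asserting $f(u)=u_{k\bmod\diam}$; as written this does not follow (if $k_u=1$ and $k=3$ there is no reason that $u_1=u_{3\bmod\diam}$), and indeed the paper never exploits the hypothesis $\dl(F(x),x)=0$ on configurations that are not $\diam$-periodic, which the $\diam$-periodic constraints alone cannot replace. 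Your concatenation argument on $\uinf{(u^nv^n)}$ supplies precisely this missing ingredient, and it can be made to work with two adjustments. First, Lemma~\ref{l:cashift} is stated only for period-$\diam$ configurations, so you must observe that its proof (via Lemma~\ref{l:plage}) extends verbatim to any period $P$, giving $F(\uinf w)=\sigma^{k'}(\uinf w)$ with $k'\in\co0P$. Second, your conclusion that ``$k_u=k_v$ as genuine integers'' is too strong: $k_u$ is only determined modulo the minimal period $p_u$ of $\uinf u$ (for $u$ constant it is arbitrary), and what the matching of the two long stretches actually yields is the compatibility $k_u\equiv k_v\pmod{\gcd(p_u,p_v)}$; one then invokes the general Chinese Remainder solvability criterion to produce a single $k$ with $k\equiv k_u\pmod{p_u}$ for every $u$, whence $f(u)=u_{k\bmod\diam}$ for all $u\in A^\diam$. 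Finally, your closing density-and-continuity step is superfluous: once $f(w)=w_{k\bmod\diam}$ holds for every window $w$, the identity $F=\sigma^{k\bmod\diam}$ is immediate from the definition of the local rule. In short, your route is longer than the paper's, but it is the one that actually proves the uniformization the paper's proof elides.
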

\begin{proof}~\begin{itemize}
\item[$"\Leftarrow"$] Proposition~\ref{p:cashift} proves that if $F=\sigma^k$ then $F\in \tilde{\id}$.
\item[$"\Rightarrow"$] Let $F$ be a CA with diameter $\diam\in \Ns$ and local rule $f$ such that $F\in \tilde{\id}$.
Then for all $x\in A^\N$ we have $\dl(F(x),x)=0$.
In particular, for all $u\in A^{\diam}$ we have $\dl(F(\uinf u),\uinf u)=0.$
%According to Lemma~\ref{l:plage}, there exist $i,j\in \N$ such that $F(\uinf u)_{\co{i}{i+\diam}}=(\uinf u)_{\co{j}{j+\diam}}$.
%In such a long interval, there exists a multiple $\diam q\in\co i{i+\diam}$ such that $F(\uinf u)_{\diam q}=(\uinf u)_{j+\diam q-i}$.
%Hence, \[f(u)=f((\uinf u)_{\co{\diam q}{(\diam+1)q}}=F(\uinf u)_{\diam q}=(\uinf u)_{j+\diam q-i}=(\uinf u)_{j-i\bmod\diam}.\]
So according to Lemma~\ref{l:cashift}, we deduce that for all $u\in A^{\diam}$, there exists $k_u\in\co0\diam$ such that 
$f(u)=F(\uinf u)_0=\sigma^{k_u}(\uinf u)_0.$\\
Defining $k$ to be any common multiple of all the ${k_u}$, for ${u\in A^\diam}$, we have:  
\[\forall u\in A^{\diam}, f(u)=\sigma^{k}(\uinf{u})_0=u_{k\bmod\diam}.\]
This is exactly the local rule of $\sigma^{k\bmod\diam}$.
\popQED\end{itemize}\end{proof}
\subsection{Lipschitz property of dill maps}
Now, we aim at proving that, unlike in the Besicovitch space, all dill maps are well-defined in the Feldman space.

\begin{lem}~\label{l:Lev}
	Let $F$ be a dill map and $M,M'\in\N$ such that for every $u\in A^{*}$ and every $j\in\co0{\len u}$, %2\diam-1}$,
	\begin{align*}
%		\dll(f^*(D_j(u)),f^*(u))-M&\le \frac{\len{f^*(u)}-\len{f^*({D_j(u))}}}2&\le M'-\dll(f^*(D_j(u)),f^*(u)).
	\dll(f^*(D_j(u)),f^*(u))&\le M+\frac{\len{f^*(u)}-\len{f^*{D_j(u)}}}2
	\\\dll(f^*(D_j(u)),f^*(u))&\le M'-\frac{\len{f^*(u)}-\len{f^*{D_j(u)}}}2.
\end{align*}
	Then for all $l\in\N$ and $u,v\in A^l$, we have:
	\[\dll(f^*(u),f^*(v))\leq(M+M')\dll(u,v)-\frac{\abs{\len{f^*(u)}-\len{f^*(v)}}}2.\]
\end{lem}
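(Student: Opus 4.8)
The plan is to reduce the general case $\dll(f^*(u),f^*(v))$ to a chain of single-deletion steps, using the characterization of $\dll$ via deletions together with subadditivity-type telescoping. First I would fix $u,v\in A^l$ and pick minimal deletion sequences $D_{j_1}\circ\cdots\circ D_{j_m}(u)=D_{j'_1}\circ\cdots\circ D_{j'_m}(v)=:w$ realizing $\dll(u,v)=m$ (recall that between words of equal length the distance is this common integer $m$). Applying $f^*$ to both sides gives $f^*\big(D_{j_1}\circ\cdots\circ D_{j_m}(u)\big)=f^*\big(D_{j'_1}\circ\cdots\circ D_{j'_m}(v)\big)$, so by the triangle inequality for $\dll$,
\[\dll(f^*(u),f^*(v))\le\dll\big(f^*(u),f^*(D_{j_1}\cdots D_{j_m}(u))\big)+\dll\big(f^*(v),f^*(D_{j'_1}\cdots D_{j'_m}(v))\big).\]
So it suffices to bound $\dll(f^*(u'),f^*(D_{j_1}\cdots D_{j_m}(u')))$ for a single word $u'$ of length $l$ by $M\cdot m + \tfrac12(\len{f^*(u')}-\len{f^*(D_{j_1}\cdots D_{j_m}(u'))})$, and symmetrically with $M'$ and a minus sign; adding the two families of bounds then yields the $(M+M')$ coefficient, and the length terms combine: on the $u$-side one gets the "$+$" length contribution, on the $v$-side the "$-$" one, and since $\len{f^*(w)}$ is common to both, what survives is $+\tfrac12(\len{f^*(u)}-\len{f^*(w)})-\tfrac12(\len{f^*(v)}-\len{f^*(w)})=\tfrac12(\len{f^*(u)}-\len{f^*(v)})$, hence after taking the worst orientation, $-\tfrac12\abs{\len{f^*(u)}-\len{f^*(v)}}$.

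The heart of the argument is therefore the single-word estimate: for any word $p$ and any iterated deletion $q=D_{k_1}\cdots D_{k_r}(p)$, show $\dll(f^*(p),f^*(q))\le M r+\tfrac12(\len{f^*(p)}-\len{f^*(q)})$ (and the twin with $M'$). I would prove this by induction on $r$, the number of deletions, using the hypothesis of the lemma as the base-case ($r=1$) plus a compatibility step: deleting one further letter from $q$, write $q'=D_k(q)$ and apply the hypothesis to $q$ itself, getting $\dll(f^*(q),f^*(q'))\le M+\tfrac12(\len{f^*(q)}-\len{f^*(q')})$; then the triangle inequality combined with the induction hypothesis telescopes the length terms (the intermediate $\len{f^*(q)}$ cancels) to give the bound with $r+1$ deletions. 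The twin inequality with $M'$ and the reversed sign is handled identically, since the hypotheses are symmetric in form.

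The main obstacle I anticipate is bookkeeping the relative positions: after performing some deletions, the positions of the remaining letters shift, and one must make sure the hypothesis "for every $u\in A^*$ and every $j$" is genuinely applied to the already-deleted intermediate words (which it is, since the hypothesis is universally quantified over all finite words), and that $f^*$ interacts correctly — namely that one is allowed to reindex so that $f^*(D_{k_1}\cdots D_{k_r}(p))$ is reached by a sequence of single deletions on the level of full words, not on the level of $f^*$-images. A secondary subtlety is that $f^*$ does not commute with deletion in a naive way (deleting a letter can change more than $\diam$ consecutive output blocks at the boundary), but this is precisely what the hypothesis is designed to absorb: it already encapsulates the per-deletion cost $M$ (resp. $M'$) and the length discrepancy, so no block-by-block analysis is needed here — that analysis will be relegated to whichever later lemma verifies the hypothesis for concrete dill maps. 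Finally, one checks the edge case where $u$ or $v$ has length $<\diam$, so $f^*$ returns $\lambda$; there $\dll(f^*(u),f^*(v))=0$ and the inequality holds trivially.
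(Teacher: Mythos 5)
Your proposal is correct and follows essentially the same route as the paper: decompose a minimal pair of deletion sequences realizing $\dll(u,v)=m$ into single-deletion steps, apply the $M$-hypothesis along the $u$-side and the $M'$-hypothesis along the $v$-side, and telescope the length terms (your induction on the number of deletions is just the telescoping sum written recursively). Your explicit remark about choosing the ``worst orientation'' to turn $+\frac{\len{f^*(u)}-\len{f^*(v)}}2$ into $-\frac{\abs{\len{f^*(u)}-\len{f^*(v)}}}2$ is a small point the paper leaves implicit, and is indeed needed.
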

\begin{proof}
Consider words $u,v$, and $m$ such that:
\[D_{j_1}\circ \cdots \circ D_{j_m}(u)=D_{j'_1}\circ \cdots \circ D_{j'_{m}}(v),\]
for some minimal edition sequences $j_1<\cdots<j_{m}<\len u$ and $j'_{1}<\cdots<j_{m}<\len v$, so that $\dll(u,v)=m$.
By the triangular inequality, one gets:
\begin{align*}
	\dll(f^*(u),f^*(v))&\le\sum_{k=1}^m\dll(f^*(D_{j_{k+1}}\circ\cdots\circ D_{j_m}(u)),f^*(D_{j_k}\circ\cdots\circ D_{j_m}(u)))+\\
	&\quad+\dll(f^*(D_{j_1}\circ\cdots \circ D_{j_m}(u)),f^*(D_{j_1}\circ\cdots\circ D_{j_m}(v)))+ \\
	&\quad+\sum_{k=1}^{m}\dll(f^*(D_{j'_{k}}\circ\cdots\circ D_{j'_m}(v)),f^*(D_{j'_{k+1}}\circ\cdots\circ D_{j'_m}(v))).
\end{align*}
	Now our two assumptions allow to write:
\begin{align*}
		\dll(f^*(u),f^*(v))&\le \sum_{k=1}^m\left(M+\frac{\len{f^*(D_{j_{k+1}}\circ\cdots\circ D_{j_m}(u))}-\len{f^*(D_{j_{k}}\circ\cdots\circ D_{j_m}(u))}}2\right)+0+\\
			&\quad+ \sum_{k=1}^m\left(M'+\frac{\len{f^*(D_{j'_{k}}\circ\cdots\circ D_{j'_m}(v))}-\len{f^*(D_{j'_{k+1}}\circ\cdots\circ D_{j'_m}(v))}}2\right)\\
			&\le Mm+\frac{\len{f^*(u)}-\len{f^*(D_{j_1}\circ\cdots\circ D_{j_m}(u))}}2+
			M'm+\frac{\len{f^*(D_{j'_1}\circ\cdots\circ D_{j'_m}(v))}-\len{f^*(v)}}2 %\quad\text{(telescoping the sums)}
		\\&\le (M+M')m+\frac{\len{f^*(u)}-\len{f^*(v)}}2.
\end{align*}
\end{proof}
\begin{lem}\label{l:lipfeld}
	Let $F$ be a dill map and $M,M'\in\N$ such that for every $u\in A^{*}$ and every $j\in\co0{\len u}$, %2\diam-1}$,
	\begin{align*}
	\dll(f^*(D_j(u)),f^*(u))&\le M+\frac{\len{f^*(u)}-\len{f^*{D_j(u)}}}2
\\\dll(f^*(D_j(u)),f^*(u))&\le M'-\frac{\len{f^*(u)}-\len{f^*{D_j(u)}}}2.
%	\dll(f^*(D_j(u)),f^*(u))-M&\le \frac{\len{f^*(u)}-\len{f^*({D_j(u))}}}2&\le M'-\dll(f^*(D_j(u)),f^*(u)).
\end{align*}
	Let $x$ be such that for every $i\in\N$, $\len{f(x_{\co i{i+\diam}})}\ge L$.
	Then for every $y\in A^\N$,
	\[\dl(F(x),F(y))\le\frac{M+M'}L\dl(x,y).\] %+\maxf f-L\right).\]
\end{lem}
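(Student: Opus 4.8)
The plan is to reduce the statement about infinite words to the finite-word estimate of Lemma~\ref{l:Lev}, by comparing prefixes of the images $F(x)$ and $F(y)$ with images of prefixes of $x$ and $y$. The subtlety is that $f^*$ applied to a length-$l$ prefix of $x$ produces the prefix of $F(x)$ of length $\cocy[l-\diam]x$ (up to boundary terms of bounded size $\maxf f$), and similarly for $y$, but the cocycles $\cocy\cdot x$ and $\cocy\cdot y$ differ, so the two images $f^*(x_{\co0l})$ and $f^*(y_{\co0l})$ do not have comparable lengths a priori. The point of the hypothesis $\len{f(x_{\co i{i+\diam}})}\ge L$ is that for the word $x$ (but not necessarily for $y$) we control the length from below: $\cocy[l-\diam]x\ge L(l-\diam+1)$, and the length difference $\abs{\len{f^*(x_{\co0l})}-\len{f^*(y_{\co0l})}}$ gets absorbed by the $-\frac12\abs{\cdots}$ term produced by Lemma~\ref{l:Lev}.

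First I would fix $l\in\N$ and set $L_x=\len{f^*(x_{\co0l})}$, $L_y=\len{f^*(y_{\co0l})}$, $L_0=\min(L_x,L_y)$. By subadditivity of $\dll$ (Proposition~\ref{p:subadd}) and Remark~\ref{liv}, I would write
\[\dll(F(x)_{\co0{L_0}},F(y)_{\co0{L_0}})\le\dll(f^*(x_{\co0l}),f^*(y_{\co0l}))+\frac{\abs{L_x-L_y}}2,\]
since $F(x)_{\co0{L_x}}=f^*(x_{\co0l})$ and $F(y)_{\co0{L_y}}=f^*(y_{\co0l})$, and truncating each to length $L_0$ costs at most $\frac{L_x-L_0}2$ resp.\ $\frac{L_y-L_0}2$ in Levenshtein distance, summing to $\frac{\abs{L_x-L_y}}2$. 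Then Lemma~\ref{l:Lev} gives $\dll(f^*(x_{\co0l}),f^*(y_{\co0l}))\le(M+M')\dll(x_{\co0l},y_{\co0l})-\frac{\abs{L_x-L_y}}2$, and the two $\frac{\abs{L_x-L_y}}2$ terms cancel, leaving
\[\dll(F(x)_{\co0{L_0}},F(y)_{\co0{L_0}})\le(M+M')\dll(x_{\co0l},y_{\co0l}).\]

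Next I would divide by $L_0$ and take $\limsup_{l\to\infty}$. Using $L_0\ge\min(\cocy[l-\diam]x,\cocy[l-\diam]y)$ up to a bounded additive error, and the hypothesis $\cocy[l-\diam]x\ge L(l-\diam+1)$, I would argue $L_0/l\to$ a limit $\ge L$ along a suitable subsequence — more precisely, I would be careful to phrase things so that the $\limsup$ over $l$ of $\dll(F(x)_{\co0{L_0}},F(y)_{\co0{L_0}})/L_0$ equals $\dl(F(x),F(y))$. This last point is where I expect the only real friction: one must check that letting the truncation length $L_0$ range over the values attained as $l\to\infty$ recovers the genuine $\limsup$ defining $\dl(F(x),F(y))$, i.e.\ that the "skipped" lengths between consecutive values of $L_0$ contribute nothing in the limit. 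This follows because consecutive values of $L_0$ differ by at most $\maxf f$ (a constant) and $\dll$ restricted to prefixes changes by at most $\maxf f$ when the length changes by $\maxf f$, so $\dll(F(x)_{\co0m},F(y)_{\co0m})/m$ for arbitrary $m$ is within $O(1/m)$ of its value at the nearest attained $L_0$. Combining, $\dl(F(x),F(y))\le\limsup_l\frac{(M+M')\dll(x_{\co0l},y_{\co0l})}{L_0}\le\frac{M+M'}L\limsup_l\frac{\dll(x_{\co0l},y_{\co0l})}l=\frac{M+M'}L\dl(x,y)$, which is the claim. The main obstacle, then, is purely the bookkeeping of matching prefix lengths between the two sides and justifying the $\limsup$ manipulation; the core inequality is an immediate consequence of Lemma~\ref{l:Lev} plus subadditivity.
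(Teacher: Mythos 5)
Your reduction to Lemma~\ref{l:Lev} is sound up to and including the inequality $\dll(F(x)_{\co0{L_0}},F(y)_{\co0{L_0}})\le(M+M')\dll(x_{\co0l},y_{\co0l})$ (the truncation costs are exactly cancelled by the negative term of Lemma~\ref{l:Lev}), and your bounded-gap argument for recovering the full $\limsup$ from the attained truncation lengths is also fine. The genuine gap is in the normalization: you divide by $L_0=\min(L_x,L_y)$ and assert that $L_0/l$ tends to something $\ge L$ along a suitable subsequence. But the hypothesis $\len{f(x_{\co i{i+\diam}})}\ge L$ controls only $L_x$; it says nothing about $L_y$, which can be as small as $\minf f\,(l-\diam+1)$ with $\minf f<L$, and then $L_0=L_y$ for \emph{every} $l$, so no subsequence helps. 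Concretely, take the substitution $\tau:0\mapsto0$, $1\mapsto11$ (for which $M=0$, $M'=2$ satisfy the hypotheses), $x=\uinf1$, $L=2$, $y=\uinf0$: then $L_x=2l$, $L_0=L_y=l$, so $L_0/l\equiv1<L$, and your final chain yields only $\dl(F(x),F(y))\le(M+M')\dl(x,y)=2$ instead of the claimed $\frac{M+M'}L\dl(x,y)=1$ --- and the latter is attained here since $\dl(\uinf1,\uinf0)=1$, so the lost factor is real, not an artifact. In general your route gives at best $\frac{(M+M')c}{L-(M+M')c}$ with $c=\dl(x,y)$ (using $\abs{L_x-L_y}\le(M+M')\dll(x_{\co0l},y_{\co0l})$, which follows from Lemma~\ref{l:Lev} and Remark~\ref{liv}), and this strictly exceeds $\frac{M+M'}Lc$ whenever $c>0$.

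The fix is small: truncate \emph{both} images at $L_x$ rather than at $L_0$. The extra cost $\dll(F(y)_{\co0{L_y}},F(y)_{\co0{L_x}})=\frac{\abs{L_x-L_y}}2$ is still exactly absorbed by the negative term of Lemma~\ref{l:Lev}, and the denominator $L_x\ge L(l-\diam+1)$ is now the quantity the hypothesis actually controls; the rest of your argument then goes through verbatim. This repaired version is essentially the paper's proof, merely indexed the other way around: the paper fixes the image-prefix length $l$ and takes the largest $k$ with $\len{f^*(x_{\co0k})}\le l$, so that $l\ge(k-\diam+1)L$ holds automatically and the leftover block of $F(y)_{\co0l}$ beyond $f^*(y_{\co0k})$ plays the role of your $\frac{\abs{L_x-L_y}}2$ term.
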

\begin{proof}
	Let $x,y\in A^\N$ and $l\in \N$.
	Consider the largest $k\in\N$ such that $\len{f(x_{\co0k})}\le l$.
	Then $F(x)_{\co0l}$ can be written $f(x_{\co0k})w$ for some $w$ of length less than $\maxf f$.
	Note that $l=\sum_{i=0}^{k-\diam}\len{f(x_{\co{i}{i+\diam}})}\ge(k-\diam+1)L$.
	Proposition~\ref{p:subadd} gives the following:
	\begin{align*}\dll(F(x)_{\co{0}{l}},F(y)_{\co{0}{l}})
		&\leq \dll(f^*(x_{\co{0}{k}}),f^*(y_{\co{0}{k}}))+\dll(w,F(y)_{\co{\len{f^*(y_{\co0k})}}l}).
	\end{align*}
		Note that the previous inequality still holds if $\len{f^*(x_{\co0k})}\ge l$, in which case the second term is $\dll(w,\lambda)=\frac{\len w}2$.
		Otherwise,
	\begin{align*}
		\dll(w,F(y)_{\co{\len{f^*(x_{\co0k})}}l})&\le\frac{\len w+\len{F(y)_{\co{\len{f^*(x_{\co0k})}}l}}}2
		\\&\le\frac{\len w+(l-\len{f^*(y_{\co0k})})+(\len{f^*(y_{\co0k})}-\len{f^*(x_{\co0k})})}2.
		\\&<\frac{(\len{f^*(y_{\co0k})}-\len{f^*(x_{\co0k})})}2+\maxf f-1.
	\end{align*}
	We can use Lemma~\ref{l:Lev} to get:
	\begin{align*}\dll(F(x)_{\co{0}{l}},F(y)_{\co{0}{l}})
		&<(M+M')\dll(x_{\co{0}{k}},y_{\co{0}{k}})-\frac{\abs{\len{f^*(x_{\co{0}{k}})}-\len{f^*(y_{\co{0}{k}})}}}2+
		\\&\quad+\frac{(\len{f^*(y_{\co0k})}-\len{f^*(x_{\co0k})})}2+\maxf f-1
		\\&<(M+M')\dll(x_{\co{0}{k}},y_{\co{0}{k}})+\maxf f-1
	\end{align*}
%	For $\varepsilon>0$ and large enough $m$, we know that $\frac{\dll(x_{\co{0}{m}},y_{\co{0}{m}})}m\le\dl(x,y)+\varepsilon$.
%	Then for every
	Since $l\ge(k-\diam+1)L$, one can write:
	\begin{align*}
		\frac{\dll(F(x)_{\co{0}{l}},F(y)_{\co{0}{l}})}{l}
			&<\frac{(M+M')\dll(x_{\co{0}{k}},y_{\co{0}{k}})+\maxf{f}-1}{(k-\diam+1)L}
			\\&\sim_{k\to\infty}\frac{M+M'}L\frac{\dll(x_{\co{0}{k}},y_{\co{0}{k}})}k
%			\\&<(M+M')\frac{\dll(x_{\co{0}{k}},y_{\co{0}{k}})}k\frac1{(1-\frac{\diam-1}k)L}+\frac{\maxf{f}-1}{(k-\diam+1)L}.
		\end{align*}
	Finally since $k$ tends to infinity when $l$ tends to infinity, we have:
	\begin{align*}
		\dl(F(x),F(y))&=\limsup_{l\to\infty}\frac{\dll(F(x)_{\co{0}{l}},F(y)_{\co{0}{l}})}{l}
		\\&\le\limsup_{k\to\infty}\frac{M+M'}L\frac{\dll(x_{\co{0}{k}},y_{\co{0}{k}})}k
		\\&\le\frac{M+M'}L\dl(x,y).
	\end{align*}
\end{proof}

\begin{cor}\label{c:dilllip}~\begin{enumerate}
		\item\label{i:dilllip} Let $F$ be a dill map and $M,M'\in\N$ such that for every $u\in A^{*}$ and every $j\in\co0{\len u}$, %2\diam-1}$,
	\begin{align*}
	\dll(f^*(D_j(u)),f^*(u))&\le M+\frac{\len{f^*(u)}-\len{f^*{D_j(u)}}}2
\\\dll(f^*(D_j(u)),f^*(u))&\le M'-\frac{\len{f^*(u)}-\len{f^*{D_j(u)}}}2.
%	\dll(f^*(D_j(u)),f^*(u))-M&\le \frac{\len{f^*(u)}-\len{f^*({D_j(u))}}}2&\le M'-\dll(f^*(D_j(u)),f^*(u)).
\end{align*}
		Then $F$ is $\frac{M+M'}{\minf{f}}$-Lipschitz.
		\item\label{i:dillpt} Let $F$ be any dill map with diameter $\diam$, and $x$ be such that for every $i\in\N$, $\len{f(x_{\co i{i+\diam}})}\ge L$.
			Then, for every $y\in A^\N$,
		\[\dl(F(x),F(y))\le({2\diam-1})\frac{\maxf f}L\dl(x,y).\]
	For example, if $f$ is a substitution $\tau$ and $x$ is such that for every $i\in\N$, $\len{\tau(x_i)}\ge L$, then for every $y\in A^\N$,
\[\dl(\overline\tau(x),\overline\tau(y))\le\frac{\maxf\tau}L\dl(x,y).\]
\item In particular, any dill map with diameter $\diam$ and lower norm $\minf f$ is $({2\diam-1})\frac{\maxf f}{\minf f}$-Lipschitz.
\\For example, any substitution $\tau$ yields a $\frac{\maxf\tau}{\minf\tau}$-Lipschitz dynamical system.
\end{enumerate}\end{cor}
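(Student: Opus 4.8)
The plan is to obtain all three items as consequences of Lemma~\ref{l:lipfeld}, whose hypotheses amount to a bound on how much $f^*$ distorts a single deletion. Item~\ref{i:dilllip} would follow at once: by definition of the lower norm, every $x\in A^\N$ satisfies $\len{f(x_{\co i{i+\diam}})}\ge\minf f$ for all $i\in\N$, so Lemma~\ref{l:lipfeld} applied with $L=\minf f$ gives $\dl(F(x),F(y))\le\frac{M+M'}{\minf f}\dl(x,y)$ for all $x,y\in A^\N$, which is the desired Lipschitz estimate.

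For item~\ref{i:dillpt} I would first exhibit explicit constants for which the hypotheses of Lemma~\ref{l:lipfeld} hold, namely $M=(\diam-1)\maxf f$ and $M'=\diam\maxf f$, so that $M+M'=(2\diam-1)\maxf f$. Given $u\in A^*$ and $j\in\co0{\len u}$, the key observation is that deleting position $j$ only perturbs $f^*$ locally: writing $f^*(u)$ as the concatenation of its window-images $f(u_{\co i{i+\diam}})$, the images of the windows with $i\le j-\diam$ form a common prefix $P$ of $f^*(u)$ and $f^*(D_j(u))$, the images of the windows with $i\ge j+1$ (merely translated in $D_j(u)$) form a common suffix $S$, and the remaining ``middle'' blocks are $M_u$, made of the at most $\diam$ window-images meeting position $j$, so $\len{M_u}\le\diam\maxf f$, and $M_{D_j(u)}$, made of the at most $\diam-1$ correspondingly modified window-images of $D_j(u)$, so $\len{M_{D_j(u)}}\le(\diam-1)\maxf f$. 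Thus $f^*(u)=P\,M_u\,S$ and $f^*(D_j(u))=P\,M_{D_j(u)}\,S$, whence $\len{f^*(u)}-\len{f^*(D_j(u))}=\len{M_u}-\len{M_{D_j(u)}}$, and subadditivity of $\dll$ (Proposition~\ref{p:subadd}) together with Remark~\ref{liv} gives $\dll(f^*(D_j(u)),f^*(u))\le\dll(M_{D_j(u)},M_u)\le\frac{\len{M_{D_j(u)}}+\len{M_u}}2$. Writing this last bound once as $\len{M_{D_j(u)}}+\frac{\len{M_u}-\len{M_{D_j(u)}}}2$ and once as $\len{M_u}-\frac{\len{M_u}-\len{M_{D_j(u)}}}2$ and invoking the two length bounds yields exactly the two hypotheses of Lemma~\ref{l:lipfeld}; applying it concludes item~\ref{i:dillpt}, and the substitution example is simply the case $\diam=1$.

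The third item would then follow from item~\ref{i:dillpt} (or directly from item~\ref{i:dilllip} with the same $M,M'$) by taking $L=\minf f$, which is legitimate since that lower bound holds on every window of every word; its substitution instance is again $\diam=1$. The one step demanding some care is the window bookkeeping in item~\ref{i:dillpt}: one must handle the boundary positions ($j<\diam-1$ or $j>\len u-\diam$) and the degenerate cases $\len u\le\diam$, in which some of $P$, $S$, $M_u$, $M_{D_j(u)}$ may be empty. I expect this to be routine, because every such case can only make $M_u$ and $M_{D_j(u)}$ shorter, so the uniform bounds $\len{M_u}\le\diam\maxf f$ and $\len{M_{D_j(u)}}\le(\diam-1)\maxf f$ persist and the computation above is unchanged.
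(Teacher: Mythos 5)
Your proof is correct and follows the same route as the paper: both items reduce to Lemma~\ref{l:lipfeld}, with the deletion-distortion hypotheses verified for $M=(\diam-1)\maxf f$ and $M'=\diam\maxf f$, then specialized to $L=\minf f$ for the Lipschitz constants. Your prefix/middle/suffix decomposition is in fact exactly the justification the paper leaves implicit when it checks those hypotheses only for $u\in A^{2\diam-1}$ (the window actually affected by the deletion), so your write-up is, if anything, more complete at that step.
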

\begin{proof}~\begin{enumerate}
\item Since, for all $x\in A^\N$, all $i\in \N$, we have $\len{f(x_{\co{i}{i+\diam}})}\geq \minf{f}$, then according to Lemma~\ref{l:lipfeld}, we find that $F$ is $\frac{M+M'}{\minf{f}}$-Lipschitz.
\item Let $u\in A^{2\diam-1}$. Then according to Remark~\ref{liv} we have: 
\[ \dll(f^*(u),f^*(D_j(u)))\le \frac{\len{f^*(u)}+\len{f^*(D_j(u))}}{2}.\]
Hence, 
\[ \dll(f^*(u),f^*(D_j(u)))\le \len{f^*(u)}-\frac{\len{f^*(u)}-\len{f^*(D_j(u))}}{2}\le \diam-\frac{\len{f^*(u)}-\len{f^*(D_j(u))}}{2}.\]
Also, 
\[ \dll(f^*(u),f^*(D_j(u)))\leq \len{f^*(D_j(u))} -\frac{\len{f^*(D_j(u))-\len{f^*(u)}}}{2}\le (\diam-1)+\frac{\len{f^*(u)}-\len{f^*(D_j(u))}}{2}.\]
%So we conclude that,
%\[\dll(f^*(D_j(u)),f^*(u))-(\diam-1) \le \frac{\len{f^*(u)}-\len{f^*({D_j(u))}}}2 \le \diam-\dll(f^*(D_j(u)),f^*(u)).\]
%And since, for all $i\in \N$ we have, $\len{f(x_{\co{i}{i+\diam}})}\ge L$, then we deduce our 
The result follows from Lemma~\ref{l:lipfeld}.
\item Since, for all $x\in A^\N$ we have $\len{f(x_{\co{i}{i+\diam}})}\geq \minf{f}$, then we deduce the result from \ref{i:dillpt}.
\end{enumerate}
\end{proof}
		
\subsection{Equicontinuity}
Finally, we study some dynamical properties since the study of some other properties brings us to some unsolved problems of word algorithmics and other with exponential complexity.

We can already derive from Corollary~\ref{equi besico} (since the Besicovitch topology is less fine) that all uniform substitutions yield equicontinuous dynamical systems in the Feldman space.
The following theorem generalizes the result by establishing a characterization of equicontinuous substitutions.
%\paragraph{Constant-length substitutions.}
%\begin{teo}
%Every uniform substitution is $L$-equicontinuous and $B$-equicontinuous.
%\end{teo}
%\begin{proof}
%According to Corollory \ref{unif-sub} we have for all $n\in \N$ :
%$$\dl(F^n(x),F^n(y))=\dl(F(F^{n-1}(x)),F(F^{n-1}(y)))\leq \dl(F^{n-1}(x),F^{n-1}(y)),\forall x,y\in A^\N.$$
%Hence, by induction on $n$ we found that every uniform substitution is equicontinuous.
%\end{proof}
%
%\paragraph{Primitive substitutions.}
\begin{teo}\label{t:primeq}
	Let $\tau$ be any substitution. Consider the dynamical system $\overline\tau$ over the Besicovitch space. Then:
	\begin{enumerate}
		\item The infinite words of ${\maxal\tau}^\N$ are equicontinuous.
		\item\label{i:noneq} The infinite words of $(\compl{\maxal\tau})^\N$ are not equicontinuous.
\end{enumerate}\end{teo}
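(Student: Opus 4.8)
The plan is to work with respect to the Feldman pseudometric $\dl$ (for the Besicovitch pseudometric $\overline\tau$ is well defined only when $\tau$ is uniform, in which case $\maxal\tau=A$ and part~2 is vacuous). Write $\rho_+$ for the spectral radius of $M(\tau)$ and recall from Remark~\ref{r:pfrob} that $\maxal\tau=\{a\in A:\rho_a=\rho_+\}$; fix once and for all an irreducible component $A'$ of $M(\tau)$ of spectral radius $\rho_+$ that reaches no other component of spectral radius $\rho_+$ (a minimal one, in the reachability order, among those of maximal spectral radius). Two elementary facts carry the argument. \textbf{(i)} If $w$ is a common subword of finite words $u,u'$, then $\tau^t(w)$ is a common subword of $\tau^t(u)$ and $\tau^t(u')$ for every $t$, since the blocks $\tau^t(w_1),\dots,\tau^t(w_{\len w})$ occur in order inside both images. \textbf{(ii)} No letter of $\compl{\maxal\tau}$ reaches $A'$ (a letter reaching a component of spectral radius $\rho_+$ lies in $\maxal\tau$), so $\overline\tau^t(z)$ never contains a letter of $A'$ when $z\in(\compl{\maxal\tau})^\N$; on the other hand Perron--Frobenius for the irreducible block $M(\tau)|_{A'}$ yields $\beta>0$ with $\#\{A'\text{-letters in }\tau^t(c)\}\ge\beta\rho_+^t$ for all $c\in A'$ and all $t$, while the fact that $A'$ reaches no other component of spectral radius $\rho_+$ forbids a polynomial blow-up and gives $K$ with $\len{\tau^t(c)}\le K\rho_+^t$ for all $c\in A'$ and all $t$.

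For the first part I would fix $x\in{\maxal\tau}^\N$, $y\in A^\N$, $t\in\N$, and use Remark~\ref{r:pfrob} to get $0<c_-\le c_+$, independent of $t$, with $c_-\rho_+^t\le\len{\tau^t(a)}\le c_+\rho_+^t$ for $a\in\maxal\tau$ and $\len{\tau^t(a)}\le c_+\rho_+^t$ for all $a\in A$. For $\varepsilon>0$ and all large $k$, the prefixes $x_{\co{0}{k}},y_{\co{0}{k}}$ admit a common subword $w$ of length $\ge(1-\dl(x,y)-\varepsilon)k$, which is over $\maxal\tau$ since it is a subword of $x_{\co{0}{k}}$; by (i) $\tau^t(w)$ is a common subword of the prefixes $\tau^t(x_{\co{0}{k}})$, $\tau^t(y_{\co{0}{k}})$ of $\overline\tau^t(x)$, $\overline\tau^t(y)$, of respective lengths $N,N'$. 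Then $N-\len{\tau^t(w)}$ and $N'-\len{\tau^t(w)}$ are sums of at most $(\dl(x,y)+\varepsilon)k$ blocks of length $\le c_+\rho_+^t$, while $\len{\tau^t(w)}\ge(1-\dl(x,y)-\varepsilon)k\,c_-\rho_+^t$; plugging this into subadditivity of $\dll$ (Proposition~\ref{p:subadd}) --- which also lets me pass from the unequal prefix lengths $N,N'$ to $\min(N,N')$ and to fill the $O(\rho_+^t)$ gaps between consecutive block boundaries --- should give, after $\varepsilon\to0$,
\[\dl(\overline\tau^t(x),\overline\tau^t(y))\ \le\ \frac{c_+}{c_-}\cdot\frac{\dl(x,y)}{1-\dl(x,y)}\qquad\text{for every }t.\]
As the right-hand side tends to $0$ with $\dl(x,y)$, $x$ is an equicontinuous point.

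For the second part I would fix $x\in(\compl{\maxal\tau})^\N$ and $c\in A'$, and set $\varepsilon:=\beta/(8K)$. Given $\delta>0$, let $p=\ceil{2/\delta}$ and let $y$ be $x$ with the letter $c$ written at every position divisible by $p$; then $\dl(x,y)\le\db(x,y)\le 1/p<\delta$. By (ii), $\overline\tau^t(x)$ has no letter of $A'$ whatsoever. In $\overline\tau^t(y)=\tau^t(y_0)\tau^t(y_1)\cdots$ the blocks $\tau^t(c)$ coming from the overwritten positions form a $\tfrac1p$-fraction of the blocks, each carries $\ge\beta\rho_+^t$ letters of $A'$, and for large $t$ each is exponentially (rate $\rho_+$) longer than all other blocks (rate $<\rho_+$, as $x$'s letters avoid $\maxal\tau$); so for $t$ large enough (depending on $p$) that the non-overwritten blocks make up a negligible share of the total length, the density of $A'$-letters in $\overline\tau^t(y)_{\co{0}{L}}$, along $L$ at block boundaries, is $\ge\beta/(4K)$. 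Since any common subword of two length-$L$ prefixes of $\overline\tau^t(x)$ and $\overline\tau^t(y)$ must avoid $A'$, one has $\dll(\overline\tau^t(x)_{\co{0}{L}},\overline\tau^t(y)_{\co{0}{L}})\ge\#\{A'\text{-letters in }\overline\tau^t(y)_{\co{0}{L}}\}$, hence $\dl(\overline\tau^t(x),\overline\tau^t(y))\ge\beta/(4K)>\varepsilon$; as $\varepsilon$ does not depend on $\delta$, $x$ is not equicontinuous.

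The step I expect to be hardest is the uniformity in $t$ of the bound in the first part: the a priori Lipschitz constant $(\maxf\tau/\minf\tau)^t$ of Corollary~\ref{c:dilllip} is useless, and the estimate stays bounded only because all letters of $\maxal\tau$ have $\tau^t$-images of comparable length --- exactly the clean asymptotics of Remark~\ref{r:pfrob}. If a chain of components of spectral radius $\rho_+$ creates polynomial-in-$t$ discrepancies inside $\maxal\tau$, one must instead build a richer common subword matching letters \emph{inside} the long blocks, which is more delicate. In the second part, conversely, the point needing care is that $\varepsilon$ is independent of the perturbation density $1/p$: although only a $1/p$-fraction of the positions of $x$ was altered, for large $t$ the corresponding blocks dwarf all the others and hence dominate $\overline\tau^t(y)$, while inside them a fixed proportion $\ge\beta$ of the letters belongs to the component $A'$ --- a component that no iterate of $x$ ever produces.
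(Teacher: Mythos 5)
Your proposal is correct in substance and follows the same two-pronged strategy as the paper: part~1 rests on the fact that every letter of $x$ has a $\tau^t$-image of length comparable to $\maxf{\tau^t}$, yielding a Lipschitz constant for $\overline\tau^t$ that is uniform in $t$; part~2 perturbs $x$ at low density by a letter whose iterated images are rich in letters that $\overline\tau^t(x)$ can never contain, and bounds $\dll$ from below by counting those letters. (You read the statement, correctly, as being about the Feldman pseudo-metric; the word ``Besicovitch'' in the statement is a slip, as the section and the proof make clear.) The differences are worth recording. In part~1 you re-derive the pointwise bound by hand via common subwords, whereas the paper simply invokes Point~\ref{i:dillpt} of Corollary~\ref{c:dilllip} with $L=\beta\rho_+^t$ --- the pointwise version of the Lipschitz estimate, not the global $(\maxf\tau/\minf\tau)^t$ one you dismiss as useless; your detour is sound but redundant. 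In part~2 your argument is actually more robust than the paper's: the paper inserts a letter from a \emph{terminal maximal} component and counts the whole of $\tau^t(y_{ki})$, obtaining $1^-$-instability, but a terminal maximal component need not exist (e.g.\ $0\mapsto001$, $1\mapsto1$, where the unique maximal component $\{0\}$ reaches the non-maximal terminal component $\{1\}$); your maximal component $A'$ minimal in the reachability order always exists, and counting only the $A'$-letters of $\tau^t(c)$ still yields a $\delta$-independent $\varepsilon>0$, which is all that non-equicontinuity requires (at the price of not getting $1^-$-instability).

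The one genuine soft spot is the one you flag yourself, and it is shared with --- indeed inherited from --- the paper: both proofs of part~1 lean on Remark~\ref{r:pfrob} in the form $\len{\tau^t(a)}\asymp\rho_+^t$ uniformly over $a\in\maxal\tau$, with no polynomial correction. When several components of spectral radius $\rho_+$ form a chain (e.g.\ $0\mapsto001$, $1\mapsto11$), one has $\len{\tau^t(0)}\sim c\,t\,2^t$ while $\len{\tau^t(1)}=2^t$, so your $c_+/c_-$ (and the paper's $\alpha/\beta$) does not exist; the statement still appears to hold there, but neither argument covers it as written, and your suggested repair (matching subwords inside the long blocks) is the right direction. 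Your part~2, by contrast, avoids this trap precisely because $A'$ reaches no other maximal component, so $\len{\tau^t(c)}\le K\rho_+^t$ genuinely holds.
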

	The following corollary can directly be derived from the theorem, by noting that $\maxal\tau$ is never empty.
\begin{cor}\label{irre-sub}~\begin{enumerate}
	\item A substitution is equicontinuous if and only if all of its terminal components are maximum.
	\item This is the case for irreducible substitutions, and uniform substitutions.
	\item Every substitution admits at least one equicontinuous infinite word.
\end{enumerate}\end{cor}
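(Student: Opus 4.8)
The plan is to reduce all three statements to Theorem~\ref{t:primeq} through a single combinatorial observation: $\maxal\tau=A$ if and only if every terminal component of $\tau$ is maximum. Recall that the irreducible components partition $A$ and that the relation ``can reach'' induces a partial order on them (by the block-triangular form of $M(\tau)$), so the components form a finite acyclic reachability diagram whose sinks are exactly the terminal components. I would prove the equivalence as follows. For the forward direction, assume $\maxal\tau=A$ and let $T$ be a terminal component; every letter of $T$ lies in $\maxal\tau$, so $T$ can reach a maximum component $M$, but being terminal it reaches no distinct component, forcing $M=T$, hence $T$ is maximum. Conversely, assume every terminal component is maximum and let $a\in A$; following the reachability order from the component of $a$ through the finite acyclic diagram one arrives at a sink, i.e.\ a terminal component, which is maximum by hypothesis, so $a$ can reach a maximum component and $a\in\maxal\tau$, whence $\maxal\tau=A$.

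Statement~1 then follows from Theorem~\ref{t:primeq}. If all terminal components are maximum, the equivalence gives $A^\N=\maxal\tau^\N$, and part~1 of the theorem makes every infinite word equicontinuous (with the uniform modulus furnished by its proof), so $\overline\tau$ is equicontinuous. If, on the contrary, some terminal component $T$ is not maximum, then $\maxal\tau\ne A$; being terminal and non-maximum, $T$ reaches only itself and hence no maximum component, so $T\subseteq\compl{\maxal\tau}$. Picking $a\in T$, the word $\uinf a\in(\compl{\maxal\tau})^\N$ is not equicontinuous by part~2 of the theorem, so $\overline\tau$ is not equicontinuous. The only delicate point is the passage from ``every word is an equicontinuous point'' to ``the system is equicontinuous'': on the non-compact Besicovitch space this is not automatic, and I rely on the modulus in the proof of Theorem~\ref{t:primeq}(1) being uniform over $\maxal\tau^\N$ rather than merely pointwise.

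For statement~2 it suffices to check $\maxal\tau=A$ in the two cases and invoke statement~1. If $\tau$ is irreducible there is a single component, namely $A$ itself, which is both terminal and, being the only one, maximum. If $\tau$ is uniform then $\len{\tau^t(a)}=\minf\tau^{\,t}$ for every $a$ and $t$, so by Remark~\ref{r:pfrob} all growth rates $\rho_a$ equal $\minf\tau=\rho_+$, and every letter lies in $\maxal\tau$ (alternatively one may simply cite Corollary~\ref{equi besico}, which already yields equicontinuity of uniform substitutions). Finally, statement~3 uses only that $\maxal\tau$ is never empty: a component realizing the global spectral radius $\rho_+$ exists (the spectrum of $M(\tau)$ is the union of those of its diagonal blocks), and it reaches itself, so it lies in $\maxal\tau$; choosing any $a\in\maxal\tau$, the word $\uinf a\in\maxal\tau^\N$ is equicontinuous by Theorem~\ref{t:primeq}(1).

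I expect the combinatorial equivalence of the first paragraph, together with the uniformity subtlety flagged in statement~1, to be the only genuinely delicate points; the specializations in statements~2 and~3 are then immediate.
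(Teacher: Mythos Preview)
Your proposal is correct and follows exactly the route the paper intends: the paper's own justification is a single sentence (``directly derived from the theorem, noting that $\maxal\tau$ is never empty''), and you have simply unpacked the implicit combinatorics, namely the equivalence $\maxal\tau=A\iff$ every terminal component is maximum, together with the uniformity of the Lipschitz constant in the proof of Theorem~\ref{t:primeq}(1). Your flag on the compactness issue is well-placed and is the only point that genuinely requires a glance back at that proof; the rest is the straightforward reduction the authors had in mind.
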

%\begin{proof}~\begin{itemize}
%	\item For every primitive substitution, there is only one block in the block-triangular form, so that $\maxal\tau$ is the whole alphabet $ A$.
%	\item By construction, there is an irreducible block which maximizes the spectral radius; all letters contained in this block belong in $\maxal\tau$.
%\end{itemize}\end{proof}
\begin{proof}[Proof of Theorem~\ref{t:primeq}]
	 Let $\tau$ be a substitution, $M(\tau)$ its matrix and $\rho_+$ its spectral radius.
	 In particular, $\maxf{\tau^t}\le\alpha\rho_+^t$ for some $\alpha>0$.
%	From Remark~\ref{r:pfrob}, the vector sequence $(M^t_{a}/\rho_a^t)$ converges to some $\alpha_a\in\R_+^*$ for every letter $a\in\co0n$, where $\rho_a$ is the maximal spectral radius among blocks of $M$ that are reachable from $a$.
	\begin{enumerate}
	\item Let $x\in{\maxal\tau}^\N$, \ie there exists $\beta>0$ such that for every $i\in\N$ and $t\in\N$, $\len{\tau^t(x_i)}>\beta
	\rho_+^t$.
	From Point~\ref{i:dillpt} of Corollary~\ref{c:dilllip}, for every $y\in A^\N$,
	 \begin{align*}
		\dl(\overline\tau^t(x),\overline\tau^t(y))&\le\frac{\maxf{\tau^t}}{\beta\rho_+^t}\dl(x,y)
		\\&\le\frac{\alpha}{\beta}\dl(x,y).
		\end{align*}
	So all iterates $\overline\tau^t$ are Lipschitz with a uniform coefficient: $\overline\tau$ is equicontinuous.
	\item Consider any infinite word $x\in(\compl{\maxal\tau})^\N$.
		By definition of $\maxal\tau$, $\rho_-=\max_{i\in\N}\rho_{x_i}$ is strictly smaller than the spectral radius $\rho_+$ of $M(\tau)$.
		Let $k\in\Ns$, and $y$ defined by $y_i=x_i$ for every $i\notin k\N$, and $y_i$ be any letter from a terminal maximal component, if $i\in k\N$.
		Note that $\dl(x,y)\le\db(x,y)\le\frac1k$.\\
		Besides, if $u\in(\compl{\maxal\tau})^*$, then for every $v\in A^*$ of the same length, $\dll(u,v)\ge\card{\sett i{\co0{\len v}}{v_i\in\maxal\tau}}$, because every edition sequence must delete at least all letters of $\maxal\tau$ from $v$, and as many letters from $u$.
		Also remark that if $a$ is from a terminal maximal component, then $\tau^t(a)$ contains no letter from $\compl{\maxal\tau}$, by definition.
		By combining the previous two arguments, we get that for every $u\in(\compl{\maxal\tau})^*$ and $v\in A^*$ such that $\len u=\len{\tau^t(v)}$, $\dll(u,\tau^t(v))$ is at least the sum, for all $v_i$ that come from a terminal maximal component, of the $\len{\tau^t(v_i)}$.
		In particular, for every $m\in\N$,
		\begin{align*}
			\dl\left(\overline\tau^t(x)_{\co0{\len{\tau^t(y_{\co0{km}})}}},\tau^t(y_{\co0{km}})\right)&\ge\sum_{i=0}^{m-1}\len{\tau^t(y_{ki})}
			\\&\ge\len{\tau^t(y_{\co0{km}})}-\sum_{i=0}^{m-1}\len{\tau^t(y_{\oo{ki}{(k+1)i}})}
			\\&\ge\len{\tau^t(y_{\co0{km}})}\left(1-\frac{1}{1+\frac{\sum_{i=0}^{m-1}\len{\tau^t(y_{ki})}}{\sum_{i=0}^{m-1}\len{\tau^t(y_{\oo{ki}{(k+1)i}})}}}\right).
			\end{align*}
		Now there exist $\alpha,\beta>0$ such that, for $t\in\N$ large enough, we have $\len{\tau^t(a)}<\alpha\rho_-^t$ for every $a\in(\compl{\maxal\tau})^\N$ and $\len{\tau^t(a)}>\beta\rho_+^t$ for every $a\in{\maxal\tau}^\N$.
	%	We get that $\len{f(y_{\co0{km}}}\le(k-1)m(\rho_-+\varepsilon)^t+m(\rho_++\varepsilon)^t$.
	After renormalizing, we get:
		\begin{align*}
			\frac{\dl\left(\overline\tau^t(x)_{\co0{\len{\tau^t(y_{\co0{km}})}}},\overline\tau^t(y)_{\co0{\len{\tau^t(y_{\co0{km}})}}}\right)}{\len{\tau^t(y_{\co0{km}})}}
				&\ge %1-\frac{(k-1)m\alpha\rho_-^t}{(k-1)m\rho_-^t+m\rho_+^t}\\&\sim_{m\to\infty}
					1-\frac1{1+\frac\beta{(k-1)\alpha}\left(\frac{\rho_+}{\rho_-}\right)^t}
		\end{align*}
	Overall, we obtain: \[\dl(x,y)\ge\limsup_{m\to\infty}\frac{\dl\left(\overline\tau^t(x)_{\co0{\len{\tau(y_{\co0{km}})}}},\overline\tau^t(y)_{\co0{\len{\tau^t(y_{\co0{km}})}}}\right)}{\len{\tau(y_{\co0{km}})}}=1.\]
		Since $k$ was taken arbitrary, $y$ is arbitrarily close to $x$, so that $x$ is $1^-$-unstable.		
\popQED\end{enumerate}\end{proof}

Apart from the subalphabet argument from Point~\ref{i:noneq} of Theorem~\ref{t:primeq}, it is usually quite hard to prove lower bounds for the Feldman pseudo-metric.
In particular, we have no example of a dill map without equicontinuous infinite word.

\begin{ex}
The Fibonacci substitution $\tau$ is primitive, so $\overline\tau$ is equicontinuous in the Feldman space, though almost no point is equicontinuous in the Besicovitch space (see Example~\ref{x:fibobes}). %because when switching the first letter, you stay at pseudo-metric $0$, but positive after applying the substitution).
\end{ex}
\begin{ex}
Let $F$ be the Xor $CA$. Then neither $F_{\dl}$ nor $F_{\db}$ is equicontinuous.
Indeed, let us prove that $\uinf0$ is $1^-$-unstable.
Let $k\in\N$, and $y=0^{2k-1}1$.
Then $\dl(x,y)\leq\db(x,y)\leq\frac{1}{2^k}$.
A classical induction on $k$ gives that $F^{2^k-1}(y)=\uinf1$ (for more details see  \cite[Example~5.6]{kurka2003topological}).
Hence: \[\db(F^p(x),F^p(y))=\dl(F^p(x),F^p(y))=1.\]
Figure~\ref{Xor figure} illustrates that $\uinf1$ is also a nonequicontinuous point. % (with a similar proof).
\begin{figure}[H]
\begin{center}
\includegraphics[scale=0.2]{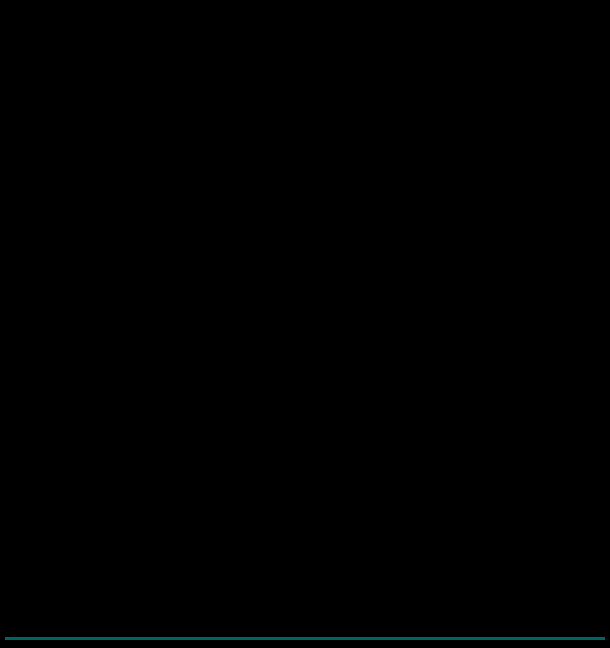}~~~~~~~~
\includegraphics[scale=0.2]{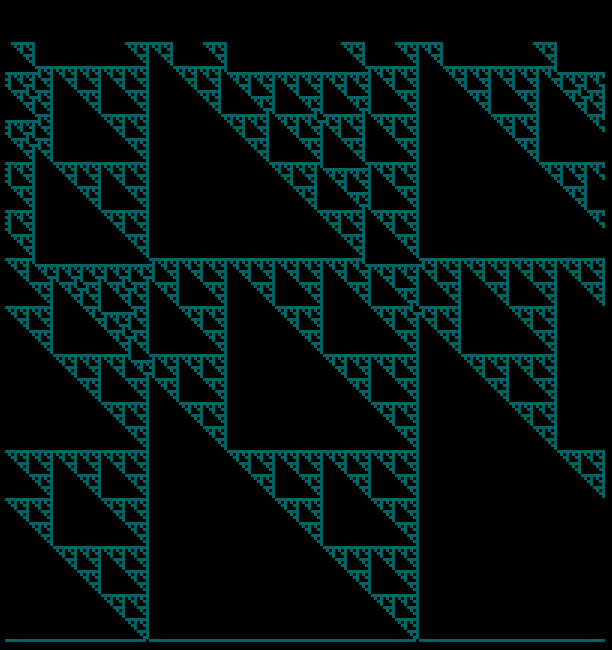}
\end{center}
\caption{A non-equicontinuous point with respect to $\dl$ and $\db$.}
\label{Xor figure}
\end{figure}
\end{ex}
\begin{ex}
Let $\tau$ be a substitution defined as follows :
\begin{eqnarray*}
\tau : 0 &\mapsto & 1 \\
1 & \mapsto & 00.
\end{eqnarray*}
Then, $M(\tau)=
\begin{bmatrix}
0&1\\
2&0
\end{bmatrix}$ and $M(\tau)^2=
\begin{bmatrix}
2&0\\
0&2
\end{bmatrix}$.
Hence, $M(\tau)$ is irreducible. So, according to Corollary~\ref{irre-sub}, we deduce that $\overline{\tau}_{\dl}$ is equicontinuous.
More precisely, $\tau^2$ is actually the doubling substitution, proven to be $1$-Lipschitz by Corollary~\ref{c:dilllip}.
Besides, it can be shown that the latter is even an isometry: the longest common subword of $\tau^2(u)$ and $\tau^2(v)$ is always obtained from doubling a common subword of $u$ and $v$.
\end{ex}
Though primitive or uniform substitutions behave smoothly in our paces, other substitutions may be more pathological.
\begin{ex}
Let $\tau$ be a substitution defined over $A=\{ 0,1\}^\N$ as follows :
\begin{eqnarray*}
\tau ~:~ 0 & \mapsto & 0 \\
1  &\mapsto & 11. 
\end{eqnarray*}
We can remark that, 
$M(\tau)=\begin{bmatrix}
1 & 0\\
0 & 2
\end{bmatrix}$, 
and, $M^n(\tau)=\begin{bmatrix}
1 & 0\\
0 & 2^n
\end{bmatrix}$.
Hence there are two compenents, $\{0\}$ and $\{1\}$, the latter being the the maximal compenent, so $\maxal\tau=\{1\}$. Then, according to Theorem~\ref{t:primeq}, $\uinf1$ is an equicontinuous point of $\overline{\tau}_{\dl}$ and $\uinf0$ is not, as illustrated in Figures~\ref{f:neqt} and~\ref{f:eqt} (where $1$ is represented in black, and $0$ in red).
\begin{figure}[H]\label{f:eqt}
\begin{center}
\includegraphics[scale=0.35]{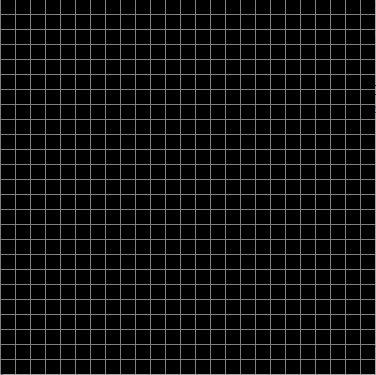}\qquad
\includegraphics[scale=0.35]{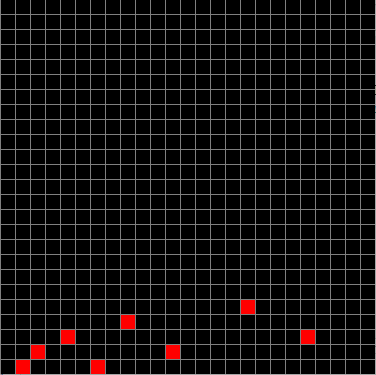}
\caption{$\uinf1$ is equicontinuous point for $\overline{\tau}$.}
\end{center}
\end{figure}
\begin{figure}[H]\label{f:neqt}
\begin{center}
\includegraphics[scale=0.35]{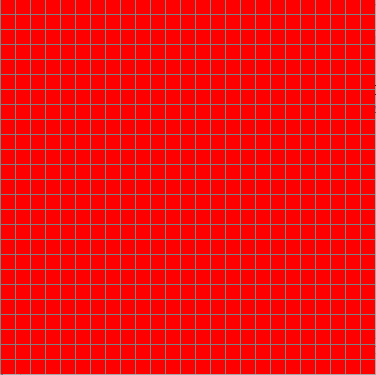}\qquad
\includegraphics[scale=0.35]{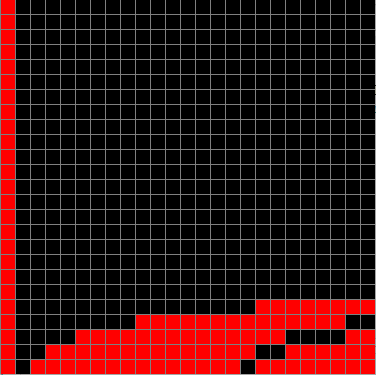}
\caption{$\uinf0$ is a non-equicontinuous point for $\overline{\tau}$.}
\end{center}
\end{figure}
\end{ex}

We have seen in the Besicovitch space (see \ref{x:thuemorse}) that the orbits a dill map may not converge towards its classical limit set.
We conjecture that this does not occur in the Feldman space.
\cite{garcia2020topological,ornstein_donald_s._equivalence_1982}, prove together, among other results, that the limit set of a primitive substitution is a singleton in the Feldman space.
The tools involved may be useful to understand our question.

\subsection{Expansiveness}
For the case of Besicovitch space, it is already proved in \cite{blanchard_cellular_1997} that there is no expansive CA over this space.
By the same method, we prove that there is no expansive CA over the Feldman space.
{Note that it does not derive directly from the corresponding result in the Besicovitch space, because some infinite words could be in the same Feldman class, hence not concerned by the expansiveness property, but not in the same Besicovitch class.}
\begin{teo}
	There is no expansive CA in the Feldman space.
\end{teo}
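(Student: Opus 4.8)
The plan is to follow the classical argument of~\cite{blanchard_cellular_1997} showing no CA is expansive over the Besicovitch space, while taking care that the pair of configurations produced stays \emph{distinct in the (coarser) Feldman quotient} --- which is precisely what the note preceding the statement warns about, and where the adaptation is not automatic. We may assume $\card A\ge2$, since otherwise $X_\dl$ is a single point and the statement is vacuous; fix two letters $0,1\in A$. Suppose toward a contradiction that $F$ is a CA with diameter $\diam$ that is expansive over $X_\dl$, with constant $\varepsilon>0$. I will exhibit $x,y\in A^\N$ with $\dl(x,y)>0$ but $\dl(F^t(x),F^t(y))<\varepsilon$ for every $t\in\N$, contradicting expansiveness.

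Take $x=\uinf0$, pick $\delta\in(0,1)$ with $2\delta<\varepsilon$, and let $y$ coincide with $x$ everywhere except on the blocks $I_k=\co{2^k}{2^k+\ipart{\delta 2^k}}$ (for $k$ large enough that these are nonempty, hence pairwise disjoint since $\delta<1$), on which $y$ takes the constant value $1$. The design principle is that the blocks grow geometrically while being geometrically spread out: $\len{I_k}\sim\delta 2^k$ is comparable to the position $2^k$ where $I_k$ sits, so $\bigcup_k I_k$ has positive upper density, at most $2\delta$ (the $\limsup$ of $\card{\bigcup_kI_k\cap\co0l}/l$ being witnessed at scales $l$ just past a block); yet enlarging each $I_k$ by a \emph{fixed} additive amount changes $\sum_k\len{I_k\cap\co0l}$ only by $O(\log l)=o(l)$.

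First I would check $\dl(x,y)>0$: since $x_{\co0l}=0^l$ has only $0$'s, the longest common subword of $x_{\co0l}$ and $y_{\co0l}$ is exactly the subword of $0$'s of $y_{\co0l}$, so by the characterization of $\dll$ in the text, $\dll(x_{\co0l},y_{\co0l})=\len{y_{\co0l}}_1$; evaluating along $l=2^m+\ipart{\delta 2^m}$ gives $\dl(x,y)\ge\delta/2>0$, so $x$ and $y$ are distinct in $X_\dl$. Next, for the iterates: an easy induction on $t$ shows that $F^t(z)_i$ depends only on $z_{\co i{i+t(\diam-1)+1}}$, hence $F^t(y)_i=F^t(x)_i$ whenever the window $\cc i{i+t(\diam-1)}$ misses $\bigcup_kI_k$. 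Thus $F^t(x)$ and $F^t(y)$ differ only on $S_t:=\bigcup_k\co{2^k-t(\diam-1)}{2^k+\ipart{\delta 2^k}}$, a union of intervals whose $k$-th term has length $\ipart{\delta 2^k}+t(\diam-1)$; since the total contribution of the $t(\diam-1)$ terms up to scale $l$ is $O(t\log l)=o(l)$, the upper density of $S_t$ equals that of $\bigcup_kI_k$, hence at most $2\delta$, \emph{uniformly in $t$}. Using $\dl\le\db$, we conclude $\dl(F^t(x),F^t(y))\le\db(F^t(x),F^t(y))\le2\delta<\varepsilon$ for every $t\in\N$, the desired contradiction.

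The two density estimates are routine $\limsup$ bookkeeping (the only mild care needed is that the density is attained at the ``just past a block'' scales, not at the powers of two). The step I expect to be the real crux --- and the reason this does not reduce to the Besicovitch statement --- is securing $\dl(x,y)>0$: a plain low-density Hamming perturbation (altering one symbol in every $n$, say) has zero Feldman pseudo-metric, so the disagreements must be packed into long solid blocks of a letter absent from $x$ in order to force deletions, while simultaneously remaining sparse enough, and stable enough under the bounded propagation speed $\diam-1$ of the CA, that every image stays $\varepsilon$-close to its counterpart. Balancing these two requirements is exactly what the geometric block construction achieves.
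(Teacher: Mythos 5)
Your proof is correct, but it takes a genuinely different route from the paper's. The paper imports the pair of configurations $x=0^{n^1}1^{n^0}0^{n^3}1^{n^2}\cdots$, $y=1^{n^1}0^{n^0}\cdots$ from the Besicovitch argument of \cite{blanchard_cellular_1997} and runs a four-way case analysis on the values $f(0^\diam)$ and $f(1^\diam)$, in each case matching one of these configurations with a uniform word whose orbit shadows it; all upper bounds are then pulled back through $\dl\le\db$, and the separation $\dl(x,\uinf0)>0$, $\dl(y,\uinf1)>0$ is only asserted at the end. You instead use a single construction ($\uinf0$ perturbed on geometrically growing, geometrically spaced solid blocks of $1$s) and replace the case analysis by the bounded propagation speed of a CA: after $t$ steps the disagreement set is the block set widened by $t(\diam-1)$ per block, which costs only $O(t\log l)=o(l)$ positions in $\co0l$ and so leaves the upper density $\le2\delta$ for every fixed $t$; the conclusion again goes through $\dl\le\db$. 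What your version buys is (i) no case analysis and no dependence on the images of $0^\diam,1^\diam$, so it works verbatim over any alphabet, and (ii) an explicit treatment of the one point the paper's proof leaves as a remark and which the preceding note flags as the real issue --- that the two configurations are separated in the coarser Feldman quotient: your computation $\dll(0^l,y_{\co0l})=\len{y_{\co0l}}_1$ via the longest-common-subword characterization, evaluated along $l=2^m+\ipart{\delta2^m}$, is exactly the argument needed, and it correctly identifies why the disagreements must be packed into long blocks rather than spread out with the same density. The paper's approach, by contrast, stays closer to the cited Besicovitch proof and reuses its machinery, at the price of the case distinction and a less detailed justification of the Feldman separation.
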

\begin{proof}
	Let $F$ be a CA with diameter $\diam$ and local rule $f$.
	Let $\varepsilon>0$ and $p\in \N$ such that $\frac{1}{1+p}<\varepsilon$. 
	To prove that $F$ is not expansive it is enough to find two infinite words at nonzero Feldman pseudo-metric such that $\dll(F^n(z),F^n(z'))<\varepsilon, \forall n\in \N$.
	From the proof of the case of Besicovitch space \cite{blanchard_cellular_1997} we can take $x,y\in A^\N$ such that :
	\begin{eqnarray*}
		x & = & 0^{n^1}1^{n^0}0^{n^3}1^{n^2}\cdots .\\
		y & = & 1^{n^1}0^{n^0}1^{n^3}0^{n^2}\cdots .
	\end{eqnarray*}
	We now distinguish four cases, depending on the image of $0^{\diam}$ and $1^{\diam}$.
	\begin{enumerate}
		\item The first case is when $f(0^{\diam} )=f(1^{\diam})=0$. We find $\dl(F(x),0^\infty )\leq \db(F(x),0^\infty )=0$, then we take $z=0^\infty$ and $z'=x$, since :  
		$$\dl(F^t(0^\infty),F^t(x))\leq \db (F^t(0^\infty),F^t(x))<\varepsilon ,\forall t\in \N.$$
		\item If $f(0^{\diam})=0$ and $f(1^{\diam})=1$ we find $\dl(F(x),x)\leq\db(F(x),x)=0$, then:  
		$$\dl (F^t(0^\infty),F^t(x))\leq \db(F^t(0^\infty),F^t(x))=\dfrac{1}{n+1}<\varepsilon,  \forall t\in \N.$$
		\item If $f(0^{\diam})=1$ and $f(1^{\diam})=1$ we find $\dl(F(y),1^\infty)\db(F(y),1^\infty)=0$, then:  
		$$\dl(F^t(1^\infty),F^t(y))\leq \db(F^t(1^\infty),F^t(y))<\varepsilon , \forall t\in \N.$$
		\item If $f(0^{\diam})=1$ and $f(1^{\diam})=0$ then :
		\begin{itemize}
			\item $\dl(1^\infty,F(0^\infty))\leq\db(1^\infty,F(0^\infty))=0$ and $\dl(F(1^\infty),0^\infty)\leq \db (F(1^\infty),0^\infty)=0$.
			\item $\dl(F(x),y)\leq\db(F(x),y)=0$ and $\dl(x,F(y))\leq\dl(x,F(y))=0$.
		\end{itemize}
		Hence: 
		$$\dl (F^t (0^\infty ),F^t (x))\leq \db(F^t (0^\infty ),F^t (x))=\dfrac{1}{n+1}<\varepsilon , \forall t\in\N.$$
	\end{enumerate}
	Then we found two pairs of infinite words $(x,0^\infty)$ and $(y,1^\infty)$ such that: 
	$$\dl (F^t (0^\infty ),F^t (x))\leq \varepsilon, \text{ and } \dl (F^t (1^\infty ),F^t (y))\leq \varepsilon.$$
	On the other hand, we can remark that $\dl(x,0^\infty)>0$ and $\dl(y,1^\infty)>0$.
	Hence, $F$ is not expansive.
\end{proof}
\section{Conclusion and perspective}
In this paper, we study CA, substitutions and in general dill maps over two non trivial topological spaces (Besicovitch space and Feldman space). 
Those spaces were constructed using two pseudo-metrics depending on two different edit distances over finite words (Hamming distance and Levenshtein distance). 
Over the Feldman space, the shift is equal to the identity, there are no expansive CA, every substitution is well-defined and admits at least one equicontinuous point.
This topology turns out to be a suitable playground for the study of the dynamical behavior of dill maps.
The construction of the Feldman pseudo-metric was made only by changing the Hamming distance with another edit distance.
This makes it natural to suggest global definition, using any distance $d$ over the set of finite words: 
\begin{defe}{\label{Généralisation}}
We define the centred pseudo-metric, denoted by $\besi{d}$, as follows: 
$$ \besi{d}(x,y)=\limsup_{l\rightarrow \infty} \dfrac{d(x_{\co{0}{l}},y_{\co{0}{l}})}{max_{u,v\in A^l}d(u,v) }, \forall x,y\in A^\N.$$
\end{defe}
On the other hand, a similar pseudo-metric known as Weyl pseudo-metric, also based on $\dhh$, measures the density of differences between two given sequences in arbitrary segments of given length. A general definition, based on any distance $d$ over finite words, would become:
\begin{defe}
We define the sliding pseudo-metric, denoted by $\weyl{d}$, as follows:
$$\weyl{d}(x,y)=\limsup_{l\rightarrow \infty} \max_{k\in \N}\dfrac{d(x_{\co{k}{k+l}},y_{\co{k}{k+l}})}{max_{u,v\in A^l}d(u,v) }, \forall x,y\in A^\N.$$
\end{defe}
The Weyl space shares many properties with the Besicovitch space; one of the main differences is that it is not complete, according to \cite{downarowicz1988quasi}; in terms of dynamics an open question is whether there exists an expansive cellular automaton over the Weyl space.
A relevant question is now the following:
Which properties of distance $d$ make dill map well-defined in the corresponding pseudometrics?

Generalizations exist of Besicovitch pseudometrics over groups (see for instance \cite{lacka_quasi-uniform_2016,capobianco_characterization_2020}). An interesting work would be to generalize more of these metrics to this setting.
Let us replace $(\N,+)$, generated by $1$, by a monoid $(\M,\cdot)$ generated by a finite set $\G$, that we assume without torsion element: for every $g\in\M$, $\sett{g^k}{k\in\N}$ is infinite.
A \emph{pattern} with finite support $U\subset\M$ is $u\in A^U$.
If $U=g\cdot V$, then the \emph{translate} by $g\in\M$ of a pattern $u$ is the pattern $\sigma^g(u)$ defined over support $V$ such that $\sigma^g(u)_i=u_{g\cdot i}$.
The \emph{deletion} $D^g_j$ at position $j\in\M$ with respect to generator $g\in\G$ is the function mapping any pattern $u$ with some support $U$ into the pattern $v$ defined over support $V=U\setminus\sett{jg^k}{jg^{k+1}\notin U}\cup\sett{jg^k}{jg^{k+1}\in U}$ by $v_{jg^k}=u_{jg^{k+1}}$, $v_{i}=u_i$ otherwise.
By torsion-freeness, $\card V=\card{U\setminus\{i\}}$.
Now one can consider, as a variant of the Levenshtein, the following:
the distance $\dmm(u,v)$ between patterns $u$ and $v$ is the minimal number $m+m'$ of deletions such that $D_{j_1}^{g_1}\circ D_{j_2}^{g_2}\circ\cdots D_{j_m}^{g_m}(u)$ and $D_{j'_1}^{g'_1}\circ D_{j'_2}^{g'_2}\circ\cdots D_{j'_{m'}}^{g'_{m'}}(u)$ have a common translate.
The Feldman-like pseudo-metric over configurations of $A^\M$, endowed with a spanning sequence $(\F_n)_{n\in\N}$ of finite sets of $\M$, would then be:
\[\dm=\limsup_{n\to\infty}\frac{\dmm(x_{\F_n},y_{\F_n}}{\card{\F_n}}
.\]
It is not clear exactly when this pseudo-metric would not behave pathologically, but probably F\o olner conditions should be assumed (see \cite{capobianco_characterization_2020}).
\bibliographystyle{alpha}
\bibliography{besiweyl}
\end{document}
[juste après a->aa;b->bb]
\pierre{bizarre : si $\tau$ est $\rho$-lipschitzien et $\tau^2$ est $1$-lipschitzien, alors pour tout $t$, \begin{align*}
		d(\tau^t(x),\tau^t(y))&\le d((\tau^2)^{\ipart{t/2}}(\tau^{t\bmod2}(x)),(\tau^2)^{\ipart{t/2}}(\tau^{t\bmod2}(y)))
		\\&\le 1^{\ipart{t/2}}\cdot d(\tau^{t\bmod2}(x),\tau^{t\bmod2}(y))
		\\&\le(max(1,\rho))\cdot d(x,y)
\end{align*}}